\title{Characteristic Currents on Cohesive Modules}
\author{Zhaobo Han}
\date{April 2024}
\newtheorem{theorem}{Theorem}[section]
\newtheorem{corollary}{Corollary}[theorem]
\newtheorem{lemma}[theorem]{Lemma}
\newtheorem{definition}{Definition}[section]
\newtheorem{question}{Question}[section]
\newtheorem*{remark}{Remark}
\newtheorem{proposition}{Proposition}[section]
\begin{document}

\maketitle
\begin{abstract}
Let $\mathcal{F}$ be a coherent sheaf on a complex variety $X$ that has a locally free resolution $E^{\bullet}$. In \cite{LW}, the authors constructed a pseudomeromorphic current whose support is contained in $supp(E^{\bullet})$ that represents products of Chern classes of $\mathcal{F}.$ In this paper, we show that their construction works for general de-Rham characteristic classes and then generalize it to represent products (in de-Rham cohomology) of characteristic forms of cohesive modules defined by Block\cite{JB1}. Finally, we state a corollary to a transgression result in \cite{HQ} that show that it is sufficient to only use the degree-$0$ and degree-$1$ parts of the superconnection to construct currents\cite{Bis}\cite{BGS} that represent characteristic forms of cohesive modules in the Bott-Chern cohomology.
\end{abstract}
\section{Introduction}
While coherent sheaves on any quasi-projective scheme over a Noetherian affine scheme admits a locally free resolution \cite[Example 6.5.1]{RH}, this is not generally true. An example is certain coherent sheaves on $Spec(K[x]/(x^2)$ for any field $K.$\cite[Example 4.18]{EF}. To circumvent such issue, in \cite[Definition 2.3.2]{JB1}\cite{JB2}, Block introduced the differential-graded category $\mathcal{P}_{\mathcal{D}}$ of \textbf{Cohesive Modules} over the differential-graded algebra (dga) $\mathcal{D}=(\mathcal{A}^{\bullet}(X), d, 0)=(\mathcal{A}^{\bullet, 0}(X), \overline{\partial}, 0)$ the Dolbeault dga of a complex manifold $X$ (and also over general curved dga's) and studied their properties. It has the important property that
\begin{theorem}\cite[Theorem 4.1.3]{JB1}
Let $X$ be a compact complex manifold, and $D_{coh}^{b}(X)$ be the bounded derived category of complexes $\mathcal{O}_X$-sheaves with coherent cohomology. Then the homotopy category $Ho(\mathcal{P}_{\mathcal{D}}),$ whose objects are exactly those of $\mathcal{P}_{\mathcal{D}}$ and whose morphisms $Ho(\mathcal{P}_{\mathcal{D}})(x, y)=H^0(\mathcal{P}_{\mathcal{D}}(x, y)),$ is equivalent to $D_{coh}^b(X).$
\end{theorem}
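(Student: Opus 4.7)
The plan is to construct an explicit functor $F: Ho(\mathcal{P}_{\mathcal{D}}) \to D^b_{coh}(X)$ and then verify that it is an equivalence of categories. Recall that an object of $\mathcal{P}_{\mathcal{D}}$ is a $\mathbb{Z}$-graded finitely generated projective $\mathcal{A}^{0,\bullet}(X)$-module $E$ (equivalently, by a Serre--Swan argument, the space of Dolbeault forms valued in a $\mathbb{Z}$-graded smooth vector bundle $V$) equipped with a degree-one superconnection $\mathbb{E}=E^0+E^1+E^2+\cdots$ satisfying $\mathbb{E}^2=0$, where $E^0$ is $\mathcal{A}^{0,0}$-linear, $E^1$ is a $\bar{\partial}$-connection, and $E^k$ for $k\geq 2$ has bidegree $(1-k,k)$. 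To define $F$, I would exploit the fact that the superconnection determines, via the twisted-complex formalism (à la Toledo--Tong / O'Brian--Toledo--Tong), a system of local holomorphic vector bundle data whose induced $E^0$-differential produces a bounded complex of locally free $\mathcal{O}_X$-modules on small polydisks, well-defined up to quasi-isomorphism; these patch to a global object in $D^b_{coh}(X)$.

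For essential surjectivity, given $\mathcal{F}^{\bullet}\in D^b_{coh}(X)$, I would pick a sufficiently fine open cover on which $\mathcal{F}^{\bullet}$ is represented by finite complexes of holomorphic vector bundles. On overlaps these local models agree only up to quasi-isomorphism, and the comparison chain maps only up to higher homotopies. Using partitions of unity and the acyclicity of $\mathcal{A}^{0,\bullet}$ on polydisks, one inductively builds the higher components $E^{\geq 2}$ as the obstructions to glueing the degree-zero data into an honest complex of holomorphic bundles; the sum $\mathbb{E}$ is then a flat superconnection by construction. I expect this to be the main obstacle: the inductive homological perturbation must be carried out so that the flatness equation $\mathbb{E}^2=0$ is satisfied in every bidegree simultaneously, and the resulting cohesive module must then be shown to be independent, up to homotopy in $\mathcal{P}_{\mathcal{D}}$, of all of the choices involved (cover, local resolutions, homotopies).

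For fully faithfulness, I would compare the morphism chain complex $\mathcal{P}_{\mathcal{D}}(x,y)$, whose differential encodes both $\bar{\partial}$ and the commutators with the superconnections $\mathbb{E}_x,\mathbb{E}_y$, to a complex computing $\mathrm{RHom}_{\mathcal{O}_X}(F(x),F(y))$. Because $\mathcal{A}^{0,\bullet}$ is a fine resolution of $\mathcal{O}_X$, the smooth $\mathrm{Hom}$-complex between the underlying graded bundles resolves the sheafy $\mathcal{H}om$-complex; the twisted differential recovers the total differential of the corresponding Dolbeault hypercohomology double complex, whose spectral sequence collapses to $\mathrm{Ext}^{\bullet}_{\mathcal{O}_X}(F(x),F(y))$. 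Passing to $H^0$ identifies chain-homotopy classes of closed degree-zero morphisms of cohesive modules with morphisms in $D^b_{coh}(X)$. Finally, one invokes the analytic-to-algebraic comparison (essentially Serre's GAGA, which is automatic here since $X$ is compact) to conclude that the derived category of coherent analytic sheaves matches the one stated in the theorem.
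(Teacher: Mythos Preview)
The paper does not give its own proof of this statement: it is quoted verbatim as \cite[Theorem 4.1.3]{JB1} and used only as background motivation for studying cohesive modules. There is therefore nothing in the paper to compare your argument against.

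As for the proposal itself, your outline is broadly the strategy of Block's original proof (construct the functor via the degree-zero part of the superconnection, obtain essential surjectivity by gluing local free resolutions through a twisted-complex/homological-perturbation argument, and deduce full faithfulness from the fact that $\mathcal{A}^{0,\bullet}$ is a fine resolution of $\mathcal{O}_X$). One correction: the closing appeal to GAGA is out of place. The theorem is stated for an arbitrary compact complex manifold, and $D^b_{coh}(X)$ already refers to the bounded derived category of complexes of \emph{analytic} $\mathcal{O}_X$-modules with coherent cohomology; there is no algebraic side to compare with, and no such comparison is needed for the argument.
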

Thus many statements about coherent sheaves admitting a locally free resolution can be translated into more general statements about cohesive modules.
\vspace{4mm}
Our main result in this paper (Theorem 5.1) generalizes Theorem 5.1 in \cite{LW}, which constructs, for a coherent sheaf $\mathcal{F}$ that admits a locally free resolution on a complex manifold $X$, a current having the same support as a resolution for $\mathcal{F}$ that represents products of Chern classes/Chern forms of $\mathcal{F},$ to a more general class of characteristic forms on cohesive modules which define classes in both the de-Rham cohomology and Bott-Chern cohomology of the manifold $X.$ At the end, we ask some questions about transgression formulae for superconnections and superconnection currents that represent certain Bott-Chern characteristic forms.
\section{Acknowledgements}
I would like to sincerely express my gratitude towards Prof.Jonathan Block, whose patience, guidance, insight, and encouragement helped me overcome many difficulties during both the learning process and the research process. I would also like to thank him for teaching an awesome Algebraic Topology sequence, where I learned many tools necessary for this project.\vspace{4mm}
Special thanks to Prof.Tony Pantev for teaching an awesome Complex Algebraic Geometry sequence, and especially for completing the course despite his health conditions. This project would have been impossible without what I learned from his lectures. I also sincerely thank Prof.Ron Donagi for serving on my thesis committee and for holding the algebraic geometry seminar, which exposed me to cool topics that furthered my interest in algebraic geometry.
\vspace{4mm}
I would like to also extend my gratitude to other faculty members and graduate students who offered me advice throughout the project, and with whom I had fruitful discussions with: Prof.Ted Chinburg, Prof.Ryan Hynd, Fangji Liu, Tianyue Liu, Xingyu Meng, Zixuan Qu, Zhecheng Wu, Shengjing Xu and David Zhu.
\section{Cohesive Modules and Unitary Connections}
\subsection{The \texorpdfstring{$\overline{\partial}$-}{}superconnection}
Let $X$ be a complex manifold, and $\mathcal{D}=(\mathcal{A}^{0, \bullet}(X), \overline{\partial})$ be its Dolbeault differential graded algebra, we can define the dg-category $\mathcal{P}_{\mathcal{D}}$ of $\mathcal{D}-$cohesive modules as follows\cite{HQ}:
the objects are $E=(E^{\bullet}, \mathbb{E}^{''}).$
Here, $E^{\bullet}=\bigoplus_{k=0}^NE_k,$ with each $E^k$ a finite dimensional complex vector bundle over $X.$ 
\begin{proposition}
Here are some basic facts about the sheaves of $E^{\bullet}-$ and $End(E^{\bullet})-$valued differential forms
\begin{enumerate}
    \item $\mathcal{A}^{0, \bullet}(X, E^{\bullet})\cong\mathcal{A}^{0, \bullet}(X)\otimes_{\mathcal{A}^{0}(X)}\mathcal{A}^0(X, E^{\bullet})$ and $\mathcal{A}^{\bullet, 0}(X, E^{\bullet})\cong$ \newline $\mathcal{A}^{\bullet, 0}(X)\otimes_{\mathcal{A}^{0}(X)}\mathcal{A}^0(X, E^{\bullet}).$ Therefore $\mathcal{A}^{\bullet}(X, E^{\bullet})\cong \mathcal{A}^{\bullet}(X)\otimes_{\mathcal{A}^{0}(X)}$ \newline $\mathcal{A}^0(X, E^{\bullet}).$
    \item Same can be said if we replace $E^{\bullet}$ by $End_{\mathbb{C}}(E^{\bullet})$ in (1).
    \item Therefore $End_{\mathcal{O}_X}(\mathcal{A}^{\bullet}(X, E^{\bullet}))\cong \mathcal{A}^{\bullet}(X, End_{\mathbb{C}}(E^{\bullet}))$ as $\mathcal{O}_X-$modules.
\end{enumerate} 
\end{proposition}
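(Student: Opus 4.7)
The plan is to prove all three items by a uniform local-to-global argument that exploits the fact that each $E^k$ is a finite rank smooth complex vector bundle, and hence locally free of finite rank as a sheaf of $\mathcal{A}^0(X)$-modules.

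For (1), I would first write down the natural multiplication map
\[
\mu\colon \mathcal{A}^{0,\bullet}(X)\otimes_{\mathcal{A}^0(X)}\mathcal{A}^0(X, E^\bullet)\longrightarrow \mathcal{A}^{0,\bullet}(X, E^\bullet),\qquad \omega\otimes s\mapsto \omega\,s,
\]
which is well defined because the bundle transition functions lie in $\mathcal{A}^0$. To verify $\mu$ is an isomorphism it suffices to work on a trivialising open $U$ on which $E^k|_U\cong\mathcal{A}^0(U)^{r_k}$: both sides then become direct sums of copies of $\mathcal{A}^{0,\bullet}(U)$ indexed by a common frame, and $\mu$ is the identity map on these bases. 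Naturality in $U$ then assembles the local isomorphisms into a global isomorphism of sheaves. The $(\bullet,0)$ case is identical with $\overline{\partial}$ replaced by $\partial$, and the full $(\bullet)$ statement follows by combining the two via the type decomposition $\mathcal{A}^\bullet\cong\bigoplus_{p,q}\mathcal{A}^{p,0}\otimes_{\mathcal{A}^0}\mathcal{A}^{0,q}$.

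For (2), since $End_{\mathbb{C}}(E^\bullet)=\bigoplus_{j,k}Hom_{\mathbb{C}}(E^j,E^k)$ is again a finite rank smooth complex vector bundle, the argument of (1) applies verbatim with $E^\bullet$ replaced by $End_{\mathbb{C}}(E^\bullet)$.

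For (3), I would combine (1) with the classical vector bundle identification $End_{\mathcal{A}^0(X)}(\mathcal{A}^0(X,E^\bullet))\cong \mathcal{A}^0(X, End_{\mathbb{C}}(E^\bullet))$ and the projection-formula type isomorphism
\[
End_{\mathcal{A}^\bullet(X)}\bigl(\mathcal{A}^\bullet(X)\otimes_{\mathcal{A}^0(X)} M\bigr)\cong \mathcal{A}^\bullet(X)\otimes_{\mathcal{A}^0(X)} End_{\mathcal{A}^0(X)}(M),
\]
which holds for any finitely generated projective $\mathcal{A}^0(X)$-module $M$ (here $M=\mathcal{A}^0(X,E^\bullet)$). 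Rewriting the left-hand side via (1) as endomorphisms of $\mathcal{A}^\bullet(X,E^\bullet)$ and invoking (2) on the right then gives the desired isomorphism. The one delicate point, which I expect to be the main obstacle in writing this up cleanly, is the meaning of $End_{\mathcal{O}_X}$ in the statement: the argument produces endomorphisms that are $\mathcal{A}^\bullet(X)$-linear, so one must either note that $\mathcal{O}_X$ is being used here as shorthand for $\mathcal{A}^0(X)$ in the smooth category (so that after choosing a local frame any such endomorphism is given by a matrix with entries in $\mathcal{A}^\bullet(X)$), or else argue separately that $\mathcal{O}_X$-linearity forces $\mathcal{A}^\bullet(X)$-linearity on locally free sheaves of this form.
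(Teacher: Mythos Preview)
The paper does not supply a proof of this proposition; it is presented as a list of ``basic facts'' and used freely thereafter (for instance in the Remark opening Section~4, where the author writes $\mathbb{E}\in End(\mathcal{A}^{\bullet}(X,E^{\bullet}))\cong\mathcal{A}^{\bullet}(X,End(E^{\bullet}))$). So there is no argument in the paper to compare against.

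Your proof is the standard one and is correct. For (1) and (2) the local-triviality argument is exactly what one expects. For (3) you have also correctly put your finger on the one genuine issue: taken literally, $End_{\mathcal{O}_X}(\mathcal{A}^{\bullet}(X,E^{\bullet}))$ is much too large to be isomorphic to $\mathcal{A}^{\bullet}(X,End_{\mathbb{C}}(E^{\bullet}))$, since an $\mathcal{O}_X$-linear map need not be $\mathcal{A}^0(X)$-linear (already $\overline{\partial}$ acting on $\mathcal{A}^{0,\bullet}(X)$ is $\mathcal{O}_X$-linear but not $\mathcal{A}^0$-linear). The way the paper actually uses item (3)---e.g.\ to regard the curvature $R_h$, which is $\mathcal{A}^{\bullet}(X)$-linear, as an element of $\mathcal{A}^{\bullet}(X,End(E^{\bullet}))$---makes clear that what is intended is the $\mathcal{A}^{\bullet}(X)$-linear (equivalently, graded $\mathcal{A}^0(X)$-linear after tensoring) endomorphisms, and your projection-formula argument establishes exactly that isomorphism. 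So your write-up should simply state that interpretation explicitly and proceed; there is no missing idea.
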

Now let $\mathbb{E}^{''}:\mathcal{A}^{0, \bullet}(X, E^{\bullet})\rightarrow\mathcal{A}^{0, \bullet}(X, E^{\bullet})$ be $\mathcal{O}_X$-linear of total degree$-1$ and satisfy the following:
\begin{enumerate}
    \item $\mathbb{E}^{''}\circ\mathbb{E}^{''}=0;$ i.e. $\mathbb{E}^{''}$ is flat.
    \item The $\overline{\partial}-$Leibniz formula $\forall s\in\mathcal{A}^0(X, E^{\bullet}), \forall\omega\in\mathcal{A}^{0, \bullet}(X),$
\begin{equation}
\mathbb{E}^{''}(s\otimes\omega):=\mathbb{E}^{''}(s)\otimes\omega+(-1)^{\deg(\omega)}s\otimes\overline{\partial}(\omega)
\end{equation}
\end{enumerate}
The meaning of total degree$-1$ is that $\forall p, q\in\mathbb{N}$
\[
\mathbb{E}^{''}(\mathcal{A}^{0, p}(X, E^q))\subseteq\bigoplus_{k\geq\max\{-p, -q+1\}}\mathcal{A}^{p+k}(X, E^{q-k+1})\Rightarrow\mathbb{E}^{''}=\bigoplus_{k\in\mathbb{Z}}\mathbb{E}^{''}_k,
\]
with $\mathbb{E}_k^{''}=0, \forall k<\max\{-p, -q+1\}.$ Note that by definition of $\mathbb{E}^{''}$ and degree, we know that $\mathbb{E}^{''}_k$ is $\mathcal{A}^{\bullet}(X)-$linear $\forall k\neq 1.$ Now consider $\forall 0\leq k\leq n,$ we have $\mathbb{E}^{''}(\mathbb{E}^{''}|_{\mathcal{A}^0(X, E^k)})=0,$ so its projection onto $\mathcal{A}^0(X, E^{k+2})$ is also $0.$ Therefore $\mathbb{E}^{''}_0\circ\mathbb{E}^{''}_0(\mathcal{A
}^0(X, E^k))=0$, and we know that
\[
\begin{tikzcd}
0\rar& \mathcal{A}^0(X, E^0)\rar["\mathbb{E}^{''}_0"]& \mathcal{A}^0(X, E^1)\rar["\mathbb{E}^{''}_0"]&\cdots\rar["\mathbb{E}^{''}_0"]& \mathcal{A}^0(X, E^N)\rar& 0
\end{tikzcd}
\]
is a complex of coherent sheaves.
\subsection{Extended Hermitian Metric and the d-connection}
Let $h$ be a Hermitian metric on $E^{\bullet}.$ Then, using Proposition 2.1.(1), we can extend $h$ to $\mathcal{A}^{\bullet}(X, E^{\bullet})$ via
\[
h(\alpha\otimes f, \beta\otimes g)=\overline{\alpha}\wedge h(f, g)\wedge\beta, \hspace{5mm}\forall\alpha, \beta\in\mathcal{A}^{\bullet}(X), \hspace{5mm}\forall f, g\in\mathcal{A}^0(X, E^{\bullet})
\]
We will write a cohesive module as $(E^{\bullet}, \mathbb{E}^{''}, h)$ to emphasize the dependence of various properties/constructions on the Hermitian metric. Now we state an important structure theorem:
\begin{theorem}\cite{HQ} For a Hermitian cohesive module $(E^{\bullet}, \mathbb{E}^{''}, h),$ there is a unique $\mathbb{E}^{'}: \mathcal{A}^{\bullet, 0}(X, E^{\bullet})\rightarrow\bigcup_{q\in\mathbb{Z}}\mathcal{A}^{\bullet+q+1, 0}(X, E^{\bullet-q})$ satisfying the following:
\begin{enumerate}
\item $\mathbb{E}^{'}$ is a $\partial-$superconnection, i.e.\footnote{Note that $\mathcal{A}^{\bullet, 0}(X, E^{\bullet})\cong\mathcal{A}^{\bullet, 0}(X)\otimes_{\mathcal{A}^0(X)}\mathcal{A}^0(X, E^{\bullet})$} $\forall\alpha\otimes f$, with $\alpha\in\mathcal{A}^{\bullet}(X), f\in\mathcal{A}^0(X, E^{\bullet}),$ we have $ \mathbb{E}^{'}(\alpha\otimes f)=\mathbb{E}^{'}(\alpha)\otimes f+(-1)^{deg(\alpha)}\alpha\otimes(\partial f).$
\item Extending $\mathbb{E}^{''}$ and $\mathbb{E}^{'}$ to $\mathcal{A}^{\bullet, \bullet}(X, E^{\bullet})$ linearly (considering that $\mathcal{A}^{\bullet, \bullet}(X, E^{\bullet})$ $\cong\mathcal{A}^{0, \bullet}(X)\otimes_{\mathcal{A}^0(X)}\mathcal{A}^{\bullet, 0}(X)\otimes_{\mathcal{A}^0(X)}\mathcal{A}^0(X, E^{\bullet})$), and we have $\mathbb{E}=\mathbb{E}^{''}+\mathbb{E}^{'}$ is a $d-$superconnection.
\item $\mathbb{E}$ is $h-$unitary, i.e. $\forall s, t\in\mathcal{A}^{\bullet}(X, E^{\bullet}),$ we have $(-1)^{deg(s)}d(h(s, t))=-h(\mathbb{E}(s), t)+h(s, \mathbb{E}(t)).$
\item Writing $\mathbb{E}^{'}=\bigoplus_{q\in\mathbb{Z}}\mathbb{E}^{'}_q, \nabla=\mathbb{E}^{''}_1+\mathbb{E}^{'}_1: \mathcal{A}^{\bullet, \bullet}(X, E^{\bullet})\rightarrow\mathcal{A}^{\bullet+1, \bullet+1}(X, E^{\bullet})$ is a unitary $d-$connection.
\end{enumerate}
\end{theorem}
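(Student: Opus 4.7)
The theorem is a Chern-connection result in the superconnection setting: given the ``antiholomorphic'' operator $\mathbb{E}''$, unitarity (3) together with the $\partial$-Leibniz rule (1) pins down the ``holomorphic'' partner $\mathbb{E}'$. To see this, restrict (3) to $s,t\in\mathcal{A}^0(X,E^{\bullet})$ and split by $(p,q)$-bidegree. Sesquilinearity of $h$ puts $h(\mathbb{E}''s,t)$ and $h(s,\mathbb{E}'t)$ in $\mathcal{A}^{\bullet,0}(X)$, and $h(\mathbb{E}'s,t)$, $h(s,\mathbb{E}''t)$ in $\mathcal{A}^{0,\bullet}(X)$. Matching the $(\bullet,0)$-parts of (3) therefore yields an equation of schematic form
\[
h(s,\mathbb{E}'t)\;=\;\pm\bigl(\partial h(s,t)-h(\mathbb{E}''s,t)\bigr),
\]
which determines $\mathbb{E}'t\in\mathcal{A}^{\bullet,0}(X,E^{\bullet})$ by non-degeneracy of $h$; the $\partial$-Leibniz rule then extends $\mathbb{E}'$ uniquely to all of $\mathcal{A}^{\bullet,0}(X,E^{\bullet})$.

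\textbf{Existence.} Define $\mathbb{E}'$ on $\mathcal{A}^0(X,E^{\bullet})$ by the displayed formula and extend by (1). Well-definedness requires that the right-hand side above be $\mathcal{A}^0$-conjugate-linear in $s$: on replacing $s$ by $fs$ with $f\in\mathcal{A}^0$, the Leibniz defect $\partial\bar f\cdot h(s,t)$ arising from $\partial h(fs,t)$ is exactly cancelled by $h(s\,\bar\partial f,t)=\partial\bar f\cdot h(s,t)$ arising from $\mathbb{E}''(fs)$, by Hermiticity of $h$. Verifying that the extended operator still satisfies (3) on an arbitrary pure tensor $\alpha\otimes f$ is then bookkeeping: expand both sides using the Leibniz rules for $d$, $\mathbb{E}''$, and $\mathbb{E}'$, and reduce to the base case on $f$. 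Condition (2), that $\mathbb{E}=\mathbb{E}''+\mathbb{E}'$ is a $d$-superconnection, is immediate from summing the $\partial$- and $\bar\partial$-Leibniz rules.

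\textbf{The unitary connection $\nabla$.} As the text already observes, $\mathbb{E}''_k$ is $\mathcal{A}^{\bullet}(X)$-linear for all $k\neq 1$; the same degree count shows $\mathbb{E}'_q$ is $\mathcal{A}^{\bullet}(X)$-linear for all $q\neq 0$. Every non-tensorial contribution of $\mathbb{E}$ is therefore concentrated in $\nabla=\mathbb{E}''_1+\mathbb{E}'_1$, whose two summands preserve the bundle grading and raise form bidegree by $(0,1)$ and $(1,0)$ respectively. Hence $\nabla$ restricts to an honest $d$-connection on each $E^k$. Projecting the unitarity identity (3) onto its bundle-degree-preserving (and thus scalar-$\mathcal{A}^{\bullet}(X)$-valued) component recovers the standard $h$-unitarity statement $dh(s,t)=h(\nabla s,t)+h(s,\nabla t)$ for sections of the summands $E^k$.

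\textbf{Main obstacle.} The only substantive subtlety is the sign and bidegree bookkeeping in the uniqueness/existence step, specifically the cancellation between the $\partial\bar f$ defect of $\partial h(fs,t)$ and the Leibniz correction in $\mathbb{E}''(fs)$ that makes the defining formula $\mathcal{A}^0$-conjugate-linear in $s$. Once this cancellation is observed, the remainder is a direct superconnection translation of the classical Chern-connection construction on a Hermitian holomorphic vector bundle.
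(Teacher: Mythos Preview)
The paper does not give its own proof of this theorem: it is stated with a citation to \cite{HQ} and used as a black box. Your argument is the standard Chern--connection construction transported to the superconnection setting, and it is the approach taken in the cited literature (Qiang, and earlier Bismut--Shen--Wei): unitarity plus the bidegree decomposition forces the defining identity for $\mathbb{E}'$ on sections, non-degeneracy of $h$ gives uniqueness, and the $\partial$-Leibniz rule extends it. So there is nothing to compare against in this paper, and your outline is correct and matches the source.

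Two small corrections. First, in your paragraph on $\nabla$ you write that $\mathbb{E}'_q$ is $\mathcal{A}^{\bullet}(X)$-linear for all $q\neq 0$, but in the paper's indexing (item~(4) of the statement) the non-tensorial component of $\mathbb{E}'$ is $\mathbb{E}'_1$, not $\mathbb{E}'_0$; you use $\mathbb{E}'_1$ correctly in the very next sentence, so this is just a slip in the index. Second, your ``schematic'' sign $\pm$ in the defining formula hides the only place where care is genuinely required: the sign in the unitarity identity~(3) depends on the \emph{total} degree of $s$ (bundle grading included), not just the form degree, and this is what makes the $\partial\bar f$ cancellation you describe come out with the right sign. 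You flag this correctly as the main obstacle, but when you write the argument out in full you should track the $(-1)^{\deg s}$ explicitly rather than absorb it into a $\pm$.
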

From the last statement we see that $\nabla$ restricts to connections on each $E^k, 0\leq k\leq N.$ Therefore, writing $\nabla_k=\nabla|_{E_k}$ and $\nabla=\bigoplus_{k=0}^N\nabla_k$, we know that $(E^k, \nabla_k)$ is a vector bundle with a $d-$connection. Note that $\nabla$ induces a connection $\nabla^{End}$ on $End(E^{\bullet})$ by
\[
\nabla^{End}: \mathcal{A}^{\bullet}(X, End(E^{\bullet}))\rightarrow\mathcal{A}^{\bullet+1}(X, End(E^{\bullet})), \hspace{5mm}\phi\mapsto\nabla\circ\phi-(-1)^{deg(\phi)}\phi\circ\nabla.
\]
\subsection{Construction of a Compatible Unitary Connection}
\begin{definition}\cite[Section 4]{LW}
Connections $\{\Theta_k\}_{0\leq k\leq N}$ on $\{E_k\}$ are \textbf{compatible} with the complex
\begin{tikzcd} 0\rar& E_0\rar["\phi_0"]& E_1\rar["\phi_1"]&\cdots\rar["\phi_{N-1}"]&E_N\rar&0
\end{tikzcd}
if $\Theta_{k+1}\circ\phi_k=-\phi_k\circ\Theta_k\Leftrightarrow\Theta_{k+1}\circ\phi_k+\phi_k\circ\Theta_k=0.$
\end{definition}
The reason why compatibility is important will be seen in the next section. In the case of cohesive modules, the $\nabla_k$'s might not be compatible with $\mathbb{E}^{''}_0.$
\vspace{4mm}
Let $Z_i$ be the set where $\mathbb{E}^{''}_0$ is not exact, and let $Z=\bigcup_{i=0}^NZ_i.$ We will call $Z$ the support of the cohesive module. Let $\chi: \mathbb{R}\rightarrow [0, 1]$ be a smooth characteristic function such that $\chi\equiv 0$ on $(-\infty, 1-\delta)$ and $\chi\equiv 1$ on $(1+\delta, \infty),$ for some arbitrarily small positive $\delta.$
\\\\
Now for a positive $\epsilon>0,$ if $Z=\emptyset,$ define $\chi_{\epsilon}\equiv 1$ on $X.$ Otherwise, define $F=\boxtimes_{k=1}^NF_k=\boxtimes_{k=1}^N det(\mathbb{E}_0^{''}|_{E_{k-1}})^{\bigwedge rank(E_{k-1})}$ on $X$\cite[Section 2]{AW}, which is a section to the coherent sheaf $\mathcal{F}=\boxtimes_{k=1}^N\left(\bigwedge^{rank(E_{k-1})}E_{k-1}^*\otimes\bigwedge^{rank(E_{k-1})}E_k\right)$. Then it is clear that $Z=\{F=0\}=\bigcup_{k=1}^N\{F_k=0\}$. If it is impossible to find an $F$ that is generically nonvanishing such that $Z\subseteq\{F=0\}$, define $\chi_{\epsilon}\equiv 1.$ Otherwise define $\chi_{\epsilon}(x)=\chi\left(\frac{|F(x)|^2}{\epsilon}\right), \forall x\in X.$ In this case $\chi_{\epsilon}\equiv 1$ except on a small neighborhood of $Z$ when $\epsilon$ is small. (Since $F$ is generically nonvanishing, we can modify $F$ such that $F\equiv 1$ except in a small neighborhood of $Z.$) Now we need another concept before constructing the compatible connections: the minimal inverse.
\begin{definition} \cite{LW, AW}
For $0\leq k\leq N-1,$ write $\mathcal{A}^0(X, E_{k+1})=\mathbb{E}^{''}_0(\mathcal{A}^0(X, E_k))$ $\oplus F_{k+1}.$ Define the \textbf{minimal inverse} $\sigma_k: E_{k+1}\rightarrow E_k$, a morphism between vector bundles, by the following conditions: if $e\in\mathbb{E}^{''}_0(E_k)$, then $\sigma_k(e)\equiv n,$ where $\mathbb{E}^{''}_0(n)=e,$ and $n$ has pointwise the minimal $h-$norm among all such vectors. If under $h, e\perp\mathbb{E}^{''}_0(E_k),$ then $\sigma_k(e)\equiv 0.$ It then follows that $\mathbb{E}^{''}_0\circ\sigma_k\circ\mathbb{E}^{''}_0=\mathbb{E}^{''}_0.$
\end{definition}
\begin{remark} Here are some properties of $\sigma_k$
\begin{enumerate}
\item The minimality of $\sigma_k(e)$ is equivalent to stating that $n\perp Ker(\mathbb{E}^{''}_0|_{E_k}),$ since any complement of $Ker(\mathbb{E}^{''}_0|_{E_k})$ injects onto $\mathbb{E}^{''}_0(E_k)$ under $\mathbb{E}^{''}_0.$\cite{MA}
\item From Remark (1), we know that $Im(\sigma_k)\perp Ker(\mathbb{E}^{''}_0|_{E_k})\Rightarrow Im(\sigma_k)\perp\mathbb{E}^{''}_0(E_{k-1}),$ since $(\mathbb{E}^{''}_0)^2=0.$ This means that $\sigma_{k-1}\sigma_k=0.$\cite{LW}
\item $\sigma_k$ is smooth on $X\backslash Z_k.$ This is because on $X\backslash Z_k,$ the rank of $\mathbb{E}^{''}_0|_{E_k}$ is constant. It then suffices to show that $\sigma_k|_{\mathbb{E}^{''}_0(E_k)}$ is smooth, since $\sigma_k$ on the orthogonal complement is constant. This follows from a description of $\sigma_k$ in \cite[Section 3]{MA}.
\end{enumerate}
\end{remark}
Now we construct the connections $\nabla_k^{\epsilon}=\nabla_k-\chi_{\epsilon}(\sigma_k\circ\nabla^{End}\circ\mathbb{E}^{''}_0)$ on $E_k$'s, and we write $\nabla^{\epsilon}=\bigoplus_{k=0}^N\nabla_k^{\epsilon}.$ Then we have
\begin{theorem} \cite[Lemma 4.4]{LW}
For any $\epsilon>0$, the connections $\{\nabla_k^{\epsilon}\}_{0\leq k\leq N}$ are compatible with $\mathbb{E}^{''}_0$ exactly where $\chi_{\epsilon}\equiv 1.$
\end{theorem}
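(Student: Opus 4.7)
The plan is to repackage the compatibility condition $\nabla^\epsilon_{k+1}\circ\mathbb{E}^{''}_0 + \mathbb{E}^{''}_0\circ\nabla^\epsilon_k = 0$ as the vanishing of the induced endomorphism $\nabla^{\epsilon,End}(\mathbb{E}^{''}_0)$, then compute it directly on the set $\{\chi_\epsilon\equiv 1\}$. Writing $\phi_k := \mathbb{E}^{''}_0|_{E_k}$ and $B_k := \nabla_{k+1}\phi_k + \phi_k\nabla_k$ for the original (unmodified) failure of compatibility, and using that $\nabla^\epsilon_j = \nabla_j - \sigma_j B_j$ on $\{\chi_\epsilon \equiv 1\}$, a direct expansion yields
\begin{equation*}
\nabla^\epsilon_{k+1}\phi_k + \phi_k\nabla^\epsilon_k \;=\; B_k - \sigma_{k+1}B_{k+1}\phi_k - \phi_k\sigma_k B_k.
\end{equation*}

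The next step is to establish the key identity $B_{k+1}\phi_k = \phi_{k+1}B_k$. I would obtain this by applying the graded Leibniz rule for the total-degree-$1$ derivation $\nabla^{End}$ to the relation $(\mathbb{E}^{''}_0)^2 = 0$: since $\mathbb{E}^{''}_0$ has total degree $1$, the sign in the Leibniz rule flips, giving $B\phi - \phi B = \nabla^{End}((\mathbb{E}^{''}_0)^2) = 0$. (One could also verify this pointwise by expanding both sides and canceling the two terms that vanish by $\phi_{k+1}\phi_k = 0$.) Substituting this into the display above reduces the failure of compatibility to $(Id_{E_{k+1}} - \sigma_{k+1}\phi_{k+1} - \phi_k\sigma_k)\circ B_k$.

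For the final step, I would interpret the operator in parentheses via the pseudo-inverse characterization of $\sigma$ from Remark 3.1(1): $\phi_k\sigma_k$ is the orthogonal projection of $E_{k+1}$ onto $\mathrm{Im}(\phi_k)$, while $\sigma_{k+1}\phi_{k+1}$ is the orthogonal projection of $E_{k+1}$ onto $\ker(\phi_{k+1})^\perp$. On $\{\chi_\epsilon\equiv 1\}$, the construction of $\chi_\epsilon$ forces $|F|^2 \geq \epsilon(1-\delta) > 0$, so this set sits inside $X\setminus Z$ where the complex is pointwise exact; hence $\mathrm{Im}(\phi_k) = \ker(\phi_{k+1})$. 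The two orthogonal projections therefore partition $E_{k+1}$ into complementary orthogonal summands and sum to $Id_{E_{k+1}}$, killing the expression above and delivering the claim. The main subtlety I anticipate is the graded-sign bookkeeping in the derivation step, since $\mathbb{E}^{''}_0$ and $\nabla^{End}$ both carry odd total degree; once $B\phi = \phi B$ is in hand, the remainder is a clean linear-algebra collapse.
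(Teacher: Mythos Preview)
Your argument is correct. The paper does not supply its own proof of this theorem; it simply cites \cite[Lemma 4.4]{LW}, and your computation is the standard one underlying that lemma: rewrite the failure of compatibility as $(\mathrm{Id}-\sigma_{k+1}\phi_{k+1}-\phi_k\sigma_k)\circ B_k$ via the derivation identity $B\phi=\phi B$, then use exactness on $X\setminus Z$ to identify the bracketed operator with the difference of two complementary orthogonal projections.

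Two minor remarks. First, the word ``exactly'' in the statement should be read as ``on the set where $\chi_\epsilon\equiv 1$'' rather than as a biconditional; the converse direction is neither needed nor generally true (if the original $\nabla$ happened already to be compatible, the $\nabla^\epsilon$ would be compatible everywhere regardless of $\chi_\epsilon$), and the paper only uses the forward implication. Second, your claim that $\{\chi_\epsilon\equiv 1\}\subset X\setminus Z$ is justified as you say: $\chi_\epsilon(x)=1$ forces $|F(x)|^2/\epsilon\ge 1-\delta>0$, hence $F(x)\ne 0$. With those points noted, nothing further is required.
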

\section{Characteristic Class in de-Rham Cohomology}
\subsection{Characteristic Forms of \texorpdfstring{$(E^{\bullet}, \mathbb{E}^{''}, h)$}{}}
Let $(E^{\bullet}, \mathbb{E}^{''}, h)$ be a Hermitian cohesive module. Define the curvature $R_h=\mathbb{E}^2=\frac 12[\mathbb{E}, \mathbb{E}]=[\mathbb{E}^{'}, \mathbb{E}^{''}].$ 
\begin{remark}
Noting that $\mathbb{E}\in End(\mathcal{A}^{\bullet}(X, E^{\bullet}))\cong\mathcal{A}^{\bullet}(X, End(E^{\bullet}))\cong$ \newline$\mathcal{A}^{\bullet}(X)\otimes_{\mathcal{A}^0(X)}\mathcal{A}^0(X, End(E^{\bullet})),$ so if we write $\mathbb{E}=\alpha\otimes f,$ then we have $R_h=(\alpha\wedge\alpha)\otimes (f\circ f).$
\end{remark}
Then, following Quillen's notion of the supertrace\cite{DQ}, for a fixed convergent complex power series $f(T),$ we define its \textbf{characteristic form} to be $Tr_s(f(R_h))$, where $Tr_s:\mathcal{A}^{\bullet}(X, End(E^{\bullet}))\rightarrow\mathcal{A}^{\bullet}(X)$ is defined as follows \cite{DQ, AW}: letting $E^{\bullet}=E^{+}\oplus E^{-},$ with $E^{+}=\bigoplus_{2|k}E_k, E^{-}=\bigoplus_{2\not|k}E_k,$ we define $Tr_s: End(E^{\bullet})\rightarrow\mathbb{C}, X\mapsto tr(\epsilon X),$ where for $e\in E^{+}, \epsilon X(e)=X(e)$, and for $e\in E^{-}, \epsilon X(e)=-X(e).$ Now extend $Tr_s$ linearly to $\mathcal{A}^{\bullet}(X, End(E^{\bullet})).$ Then we have the following facts
\begin{theorem}\cite{HQ}[Corollary 2.26]
Characteristic forms are closed, so they define classes in $H^{\bullet}_{dR}(X, \mathbb{C}).$ These classes are well-defined by Serre's Vanishing Theorem. We then have $[Tr_s(f(R_h))]=[Tr_s(f(\Theta_{\nabla}))]$\footnote{This $\nabla$ was defined in Theorem 2.1.(4).} in $H^{\bullet}_{dR}(X, \mathbb{C}),$ where $\Theta_{\nabla}=\nabla^2$ is the curvature form associated to $\nabla.$ 
\end{theorem}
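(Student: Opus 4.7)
The plan is to establish the three assertions in order using standard Chern--Weil machinery adapted to Quillen's superconnection setting, with $Tr_s$ in place of the ordinary trace.

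For closedness, the key ingredients are the Bianchi identity $[\mathbb{E}, R_h] = [\mathbb{E}, \mathbb{E}^2] = 0$, immediate from the super-Jacobi identity since $\mathbb{E}$ has total odd degree, and the identity $d \circ Tr_s = Tr_s \circ [\mathbb{E}, \cdot]$ on $\mathcal{A}^{\bullet}(X, End(E^{\bullet}))$, which follows from the $h$-unitarity assertion of Theorem 2.1.(3) together with the standard fact that $Tr_s$ annihilates graded commutators. Combining these, $d\,Tr_s(f(R_h)) = Tr_s([\mathbb{E}, f(R_h)]) = 0$, since $f(R_h)$ is a convergent power series in $R_h$ and therefore super-commutes with $\mathbb{E}$ by Bianchi.

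For both the well-definedness assertion and the comparison with $Tr_s(f(\Theta_{\nabla}))$, my plan is to run a single Chern--Simons-style transgression on an affine family of superconnections. Given two $d$-superconnections $\mathbb{E}_0, \mathbb{E}_1$ on $E^{\bullet}$, the difference $A := \mathbb{E}_1 - \mathbb{E}_0$ is $\mathcal{A}^{\bullet}(X)$-linear because the Leibniz contributions cancel, so the interpolation $\mathbb{E}_t := \mathbb{E}_0 + tA$ is a smooth family of $d$-superconnections with curvatures $R_t = \mathbb{E}_t^2$. Differentiating, $\dot{R}_t = [\mathbb{E}_t, A]$, and the supertrace-commutator identity yields
\[
\frac{d}{dt} Tr_s(f(R_t)) \;=\; Tr_s\bigl(f'(R_t)\,[\mathbb{E}_t, A]\bigr) \;=\; d\, Tr_s\bigl(f'(R_t)\,A\bigr),
\]
so that integrating over $t \in [0,1]$ expresses $Tr_s(f(R_1)) - Tr_s(f(R_0))$ as a globally exact form. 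I would apply this first to two different choices of Hermitian metric, invoking Serre's Vanishing Theorem to ensure the required transgression data globalises, so as to obtain well-definedness of the class, and then apply it with $\mathbb{E}_0 = \nabla$ (viewed as a $d$-superconnection concentrated in degree $1$) and $\mathbb{E}_1 = \mathbb{E}$ to obtain the desired equality in $H^{\bullet}_{dR}(X,\mathbb{C})$. In this second application $A = \sum_{k\neq 1} \mathbb{E}''_k + \sum_{q\neq 1} \mathbb{E}'_q$ is exactly the sum of components observed to be $\mathcal{A}^{\bullet}(X)$-linear in the discussion preceding Theorem 2.1, so the transgression identity applies verbatim.

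The main obstacle I anticipate is verifying the supertrace-commutator identity $d \circ Tr_s = Tr_s \circ [\mathbb{E}, \cdot]$ in this $\mathbb{Z}$-graded setting: one must track how the total-degree grading on $\mathcal{A}^{\bullet}(X, End(E^{\bullet}))$ interacts with the non-connection pieces $\mathbb{E}''_k, \mathbb{E}'_q$ for $k, q \neq 1$, and check that the signs produced by $h$-unitarity in Theorem 2.1.(3) align with Quillen's conventions so that $Tr_s$ continues to vanish on graded commutators built from $\mathbb{E}$. Once this identity is in hand, both the closedness argument and the transgression computation become formal, and the theorem follows.
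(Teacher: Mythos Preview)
The paper does not supply its own proof of this theorem; it is quoted verbatim from \cite{HQ} (their Corollary~2.26) and used as a black box. So there is no in-paper argument to compare against, and your proposal should be read as a reconstruction of the standard Quillen--Chern--Weil argument that \cite{HQ} presumably carries out.

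Your outline is correct and is exactly the expected approach: Bianchi plus $d\circ Tr_s = Tr_s\circ[\mathbb{E},\cdot]$ for closedness, then an affine interpolation of superconnections and the Chern--Simons transgression for independence of the metric and for the comparison with $\nabla$. Two small corrections. First, the identity $d\circ Tr_s = Tr_s\circ[\mathbb{E},\cdot]$ does not require $h$-unitarity; it follows from the $d$-Leibniz rule for $\mathbb{E}$ (Theorem~2.1.(2)) applied to the degree-$1$ piece $\nabla$, together with the vanishing of $Tr_s$ on supercommutators of the $\mathcal{A}^\bullet(X)$-linear pieces $\mathbb{E}''_k,\mathbb{E}'_q$ for $k,q\neq 1$. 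Unitarity is what pins down $\mathbb{E}'$ uniquely from $\mathbb{E}''$ and $h$, but the trace identity holds for any $d$-superconnection. Second, your invocation of Serre's Vanishing Theorem is as vague as the paper's own; the transgression argument you wrote already globalises without further input, since $A=\mathbb{E}_1-\mathbb{E}_0$ is a global section of $\mathcal{A}^\bullet(X,End(E^\bullet))$. Whatever role Serre vanishing plays in \cite{HQ} (perhaps in showing the class depends only on the underlying object of $D^b_{coh}(X)$ rather than the particular cohesive representative), it is not needed for the statement as written here.
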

\subsection{Characteristic Forms of Exact Chain Complexes}
We first show the following claim establishing a more explicit relation between the characteristic form and the curvature form. Let $(E, \nabla)$ be a vector bundle on $X.$ Denote $tr:\mathcal{A}^{\bullet}(X, End(E))\rightarrow\mathcal{A}^{\bullet}(X)$ the extension of the trace function on $\mathcal{A}^0(X, End(E)).$ Then we have
\begin{proposition}
$[tr(f(\Theta_{\nabla}))]\in H^*_{dR}(X)$ is a polynomial in the Chern classes of $E.$ Specifically, it is a symmetric polynomial in $[\Theta_{\nabla}].$ It is also a sum of homogeneous polynomimals in $[\Theta_{\nabla}].$
\end{proposition}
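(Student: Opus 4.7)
The plan is to reduce the assertion to the standard Chern--Weil identification of traces of curvature powers with Chern classes, via the power series expansion of $f$. Write $f(T)=\sum_{n=0}^{\infty}a_n T^n$ as a convergent power series. Since $\Theta_{\nabla}\in \mathcal{A}^2(X,\mathrm{End}(E))$ is a $2$-form valued endomorphism, $\Theta_{\nabla}^n\in\mathcal{A}^{2n}(X,\mathrm{End}(E))$. Because $X$ is a finite-dimensional manifold, $\mathcal{A}^k(X)=0$ for $k>2\dim_{\mathbb{R}}X$, so only finitely many terms in the expansion contribute and
\[
\mathrm{tr}(f(\Theta_{\nabla}))=\sum_{n=0}^{N}a_n\,\mathrm{tr}(\Theta_{\nabla}^n)
\]
is a genuine finite sum. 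Hence it suffices to analyze $[\mathrm{tr}(\Theta_{\nabla}^n)]\in H^{2n}_{dR}(X)$ for each $n$.

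For a fixed $n$, I would invoke the splitting principle: after pulling back to an appropriate flag bundle $\pi\colon\mathrm{Fl}(E)\to X$ (whose induced map on de Rham cohomology is injective), the bundle $\pi^*E$ admits a filtration by line subbundles, and $\pi^*\Theta_{\nabla}$ becomes upper-triangular with diagonal entries the \emph{Chern roots} $\lambda_1,\dots,\lambda_r$, where $r=\mathrm{rank}(E)$. Then $\mathrm{tr}(\Theta_{\nabla}^n)$ pulls back to the power sum $p_n(\lambda_1,\dots,\lambda_r)=\sum_i\lambda_i^n$. Newton's identities express $p_n$ as a universal integer polynomial in the elementary symmetric polynomials $e_1,\dots,e_n$, and the elementary symmetric polynomials in the Chern roots are, by definition (up to sign and the usual $2\pi i$ normalization), the pullbacks of the Chern classes $c_1(E),\dots,c_n(E)$. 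Injectivity of $\pi^*$ then descends the identity to $X$, giving $[\mathrm{tr}(\Theta_{\nabla}^n)]$ as a polynomial in $c_1(E),\dots,c_n(E)$.

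The remaining two assertions are essentially formal consequences. Since $\mathrm{tr}$ is invariant under permutations of the eigenvalues, each $\mathrm{tr}(\Theta_{\nabla}^n)$ corresponds to a symmetric polynomial in the Chern roots, and therefore so does $\mathrm{tr}(f(\Theta_{\nabla}))$; this gives the ``symmetric polynomial in $[\Theta_{\nabla}]$'' claim. The decomposition $\mathrm{tr}(f(\Theta_{\nabla}))=\sum_n a_n\mathrm{tr}(\Theta_{\nabla}^n)$ is automatically a sum of pieces that are homogeneous of degree $n$ in $\Theta_{\nabla}$ (equivalently, of total form-degree $2n$), giving the third claim.

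The only genuine subtlety I anticipate is the convergence bookkeeping for the power series $f$, but as noted above the finite-dimensionality of $X$ collapses this to a finite sum, so no real analytic input is needed. The main substantive step, then, is the splitting-principle reduction plus the Newton-identity manipulation, both of which are entirely standard; no new idea beyond classical Chern--Weil theory is required.
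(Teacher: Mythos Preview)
Your argument is correct, but it proceeds along a different line from the paper's. The paper argues more abstractly: it verifies that the assignment $(E,\Delta)\mapsto [\mathrm{tr}(f(\Theta_\Delta))]$ is a natural transformation $\mathsf{Vect}_k(-;\mathbb{C})\to H^*_{dR}(-)$ (invoking the splitting principle only to check functoriality under pullback), and then appeals to the classification result \cite[Proposition~23.11]{BT} that every such natural transformation is a polynomial in Chern classes. Your route is the classical Chern--Weil computation: expand $f$ as a finite sum, pull back via the splitting principle so that $\mathrm{tr}(\Theta_\nabla^n)$ becomes the power sum $p_n$ in the Chern roots, and use Newton's identities to rewrite $p_n$ in terms of the elementary symmetric functions, i.e.\ the Chern classes. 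Your approach is more elementary and self-contained, avoiding the characteristic-class classification theorem entirely; the paper's approach is shorter once that theorem is granted, and makes the ``polynomial in Chern classes'' conclusion fall out structurally rather than by explicit symmetric-function manipulation. Either way the homogeneity and symmetry claims are immediate, as you note.
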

\begin{proof}
Let $X, Y$ be two complex algebraic varieties, and $(E, \Delta)$ a vector bundle of rank $k$ on $X.$ We show that $(E, \Delta)\mapsto [tr(f(\Theta_{\Delta}))]$ is a natural transformation from $\mathsf{Vect}_k(-; \mathbb{C})$ to $H^*(-).$ For a morphism $\phi: Y\rightarrow X$, let $(\phi^*E, \phi^*\Delta)$be the pullback vector bundle on $Y$ with the pullback connection which is functorial (as defined in \cite[Theorem 3.6(a)]{ROW}), we know that $\Theta_{\phi^*\Delta}=\phi^*\Theta_{\Delta}.$ Now it suffices to show that $\forall i\in\mathbb{N}, \phi^*[tr(\Theta_{\Delta}^i)]=[tr((\phi^*\Theta_{\Delta})^i)].$ Recall the splitting principle
\begin{lemma}\cite[Section 21]{BT}
Let $E\rightarrow X$ a $C^{\infty}$ complex vector bundle and $p:\mathbb{P}(E)\rightarrow M$ be the projection map. Then $p^*(E)\rightarrow\mathbb{P}(E)$ splits into a direct sum of line bundles and $p^*:H^*(X)\rightarrow H^*(\mathbb{P}(E))$ is an embedding.
\end{lemma}
Using the lemma and the fact that $\phi^*\mathbb{P}(E)=\mathbb{P}(\phi^*(E))$, and considering the commutative diagram
\[\begin{tikzcd}
	{L_1\oplus\cdots\oplus L_n\rightarrow\mathbb{P}(E)} && {\phi^*(L_1)\oplus\cdots\oplus\phi^*(L_n)\rightarrow\mathbb{P}(\phi^*(E))}\\
	\\
	{E\rightarrow X} && {\phi^*(E)\rightarrow Y}
	\arrow["{\phi^*}"', from=3-1, to=3-3]
	\arrow["{p_E^*}", from=3-1, to=1-1]
	\arrow["{\phi^*}", from=1-1, to=1-3]
	\arrow["{P^*_{\phi^*(E)}}"', from=3-3, to=1-3]
\end{tikzcd}\]
and then noting that $\mathbb{P}(\phi^*(E))=\phi^*(\mathbb{P}(E)),$ we can reduce to when $E\rightarrow X$ is a line bundle, in which case $\phi^*$ amounts to multiplication by an element in $\mathcal{O}_X$ on both sides.
\\\\
Now recall that every natural transformation $\mathsf{Vect}_k(-, \mathbb{C})\rightarrow H^*_{dR}(-)$ can be expressed as a polynomial in the Chern classes \cite[Proposition 23.11]{BT}, it remains to show that the Chern classes $c_n(E)$ is a polynomial of $[\Theta_{\Delta}].$ This follows directly from \cite[Definition 3.4]{ROW}. 
\end{proof}
\begin{remark}
To show that $[tr(f(\Theta_{\nabla}))]$ is a polynomial, not a series, in the Chern classes, we implicitly used the fact that degrees$\geq 2dim_{\mathbb{C}}(X)$ vanish in $H^*_{dR}(X).$
\end{remark}
Now we recall a Whitney formula \cite[Lemma 4.22]{BB}
\begin{theorem}
Let $\phi$ be a symmetric homogeneous polynomial of degree less than or equal to $dim_{\mathbb{C}}(X),$ then let $\{D_k\}_{0\leq k\leq N}$ be compatible connections on $(E^{\bullet}, \mathbb{E}^{''}).$ If the complex $(E^{\bullet}, \mathbb{E}_0^{''})$ is exact, then in de-Rham cohomology, \newline $[\phi(\Theta_{D_0})]=-\left(\frac{2\pi}i\right)^{deg(\phi)}\cdot[\phi(\sum_{i=1}^N(-1)^iE_i)]$\cite[Equation 4.15]{BB}, which is defined as follows: letting $\sigma_k(D_j)=$ $\left(\frac{2\pi}i\right)^k\cdot c_k(E_j, D_j)$\footnote{Here $\sigma_k$ denotes the elementary symmetric polynomial of degree $k.$}\cite[Equation 3.34]{BB}, and $\phi(D_j)=\tilde{\phi}(\sigma_1(D_j), \cdots, \sigma_{deg(\phi)}(D_j)),$ and writing \newline $\sum_{i=1}^N(-1)^iE_i$ as $\widehat{E^{\bullet}},$ we define $\phi(\widehat{E^{\bullet}})\equiv \tilde{\phi}(c_1(\widehat{E^{\bullet}}), \cdots, c_{deg(\phi)}(\widehat{E^{\bullet}})).$  
\end{theorem}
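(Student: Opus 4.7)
My strategy is to reduce the statement to the classical Whitney sum formula for Chern forms of short exact sequences equipped with compatible connections, and then iterate along the exact complex $(E^{\bullet}, \mathbb{E}_0^{''})$.

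First, by Proposition 4.1, I may write $\phi = \tilde{\phi}(\sigma_1, \ldots, \sigma_k)$ where $k = \deg(\phi)$ and the $\sigma_j$ are the elementary symmetric polynomials. Since $\tilde{\phi}$ is a polynomial, the identity for $\phi$ reduces to the analogous identities for each $\sigma_j$, i.e.\ for the Chern forms themselves. Combined with the normalization $\sigma_k(D_j) = (2\pi/i)^k c_k(E_j, D_j)$, this accounts for the overall factor $(2\pi/i)^{\deg(\phi)}$ on the right-hand side.

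Next, the exactness of $(E^{\bullet}, \mathbb{E}_0^{''})$ guarantees that $\mathrm{im}(\mathbb{E}_0^{''}|_{E_{k-1}}) = \ker(\mathbb{E}_0^{''}|_{E_k})$ is a smooth subbundle of $E_k$ of locally constant rank, yielding short exact sequences
\[
0 \longrightarrow \ker(\mathbb{E}_0^{''}|_{E_k}) \longrightarrow E_k \longrightarrow \mathrm{im}(\mathbb{E}_0^{''}|_{E_k}) \longrightarrow 0
\]
for each $k$, together with the identification $\mathrm{im}(\mathbb{E}_0^{''}|_{E_k}) \cong \ker(\mathbb{E}_0^{''}|_{E_{k+1}})$. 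The compatibility relation $D_{k+1}\circ \mathbb{E}_0^{''} + \mathbb{E}_0^{''}\circ D_k = 0$ from Definition 3.1 ensures the $D_k$ restrict to compatible connections on these sub and quotient bundles. I would then invoke the Chern--Weil Whitney formula at the form level: for a short exact sequence equipped with connections compatible with the morphisms, the total Chern form of the middle term equals the product of the total Chern forms of the sub and quotient, up to an exact term. Applying this to each $E_k$ and telescoping through $\mathrm{im}(\mathbb{E}_0^{''}|_{E_k}) \cong \ker(\mathbb{E}_0^{''}|_{E_{k+1}})$ collapses the alternating product $\prod_{k=0}^N c(E_k, D_k)^{(-1)^k}$ to the trivial class in $H^{\bullet}_{dR}(X)$. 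Isolating $c(E_0, D_0)$ on one side and pushing the result through $\tilde{\phi}$ produces the stated formula, with the leading $-$ sign arising from transposing $c(E_0, D_0)$ across the telescoped identity.

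The main obstacle is establishing Whitney's formula at the level of characteristic forms rather than only characteristic classes, which is precisely where compatibility of $\{D_k\}$ with $\mathbb{E}_0^{''}$ becomes indispensable; non-compatible connections would only yield the identity modulo exact terms at each stage, and the accumulated transgressions across the $N$ steps of the telescoping would be delicate to control. The hypothesis $\deg(\phi) \leq \dim_{\mathbb{C}}(X)$ also enters to guarantee that $\tilde{\phi}$ is genuinely a polynomial in the Chern classes when passed to $H^{\bullet}_{dR}(X)$, as highlighted in the Remark after Proposition 4.1.
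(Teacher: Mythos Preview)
The paper does not actually prove this theorem: it is quoted verbatim as a known Whitney formula, with attribution to \cite[Lemma 4.22]{BB}, and the text immediately moves on to the Remark and then to deriving the Corollary. So there is no proof in the paper to compare your proposal against.

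That said, your outline is the standard route one finds in the literature for this kind of result, and it is essentially what underlies the cited lemma in \cite{BB}: break the exact complex into short exact sequences using the subbundles $\ker(\mathbb{E}_0^{''}|_{E_k})$, use compatibility of the $D_k$ to get Whitney multiplicativity of the total Chern form for each short exact sequence, and telescope. Two small points are worth tightening. First, your sentence ``the identity for $\phi$ reduces to the analogous identities for each $\sigma_j$'' is imprecise: the telescoping yields the single relation $\prod_{k=0}^N c(E_k,D_k)^{(-1)^k}=1$ among total Chern forms, from which you read off all the $c_j(E_0,D_0)$ simultaneously in terms of the $c_j(\widehat{E^{\bullet}})$ and then feed those into $\tilde{\phi}$; it is not a separate identity for each $\sigma_j$. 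Second, you flag the form-level Whitney formula as ``the main obstacle,'' but the theorem as stated is a \emph{cohomological} identity (note the brackets $[\,\cdot\,]$), so the class-level Whitney formula already suffices here; the form-level refinement via compatible connections is what is actually needed for the subsequent Corollary, where one wants the characteristic \emph{form} to vanish pointwise on $X\setminus Z$.
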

\begin{remark}
Here, the Chern classes of $\widehat{E^{\bullet}}$ satisfy the following:
\[
1+\sum_{i=1}^{2dim_{\mathbb{C}}(X)}c_i(\widehat{E^{\bullet}})=c(\widehat{E^{\bullet}})\equiv\bigoplus_{k=0}^Nc(E_i)^{(-1)^i}
\]
\end{remark}
Since $\phi$ is homogeneous, we know that $[\phi(\Theta_{D_0})]=\left(\frac{2\pi}i\right)^{deg(\phi)}$\\$[\tilde{\phi}(c_1(E_0, D_0), \cdots, c_{deg(\phi)}(E_0, D_0)].$ Therefore From this and Proposition 3.1, we automatically have
\begin{corollary}
On $X\backslash Z,$
\[
Tr_s(f(\Theta_{\nabla}))=\bigoplus_{k=0}^NTr_s(f(\Theta_{\nabla_k}))=\sum_{k=0}^N(-1)^ktr(\Theta_{\nabla_k})=0.
\]
\end{corollary}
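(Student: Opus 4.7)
The plan is to handle the three equalities in turn: I will unpack the first two from the direct-sum structure of $\nabla$ together with the definition of the supertrace, and then derive the vanishing by reducing to K-theoretic triviality of $\widehat{E^\bullet}$ on $X\setminus Z$.

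For the first two equalities, Theorem 2.1(4) provides $\nabla=\bigoplus_{k=0}^N\nabla_k$, which is block diagonal with respect to $E^\bullet=\bigoplus_k E_k$; hence $\Theta_\nabla=\bigoplus_k\Theta_{\nabla_k}$ and $f(\Theta_\nabla)=\bigoplus_k f(\Theta_{\nabla_k})$. Since the grading operator $\epsilon$ defining $Tr_s$ acts by $(-1)^k$ on $E_k$, both identities reduce to straightforward bookkeeping, yielding $Tr_s(f(\Theta_\nabla))=\sum_k(-1)^k tr(f(\Theta_{\nabla_k}))$; I read the final expression in the statement as implicitly carrying this $f$.

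For the vanishing on $X\setminus Z$ I work in de-Rham cohomology. By Proposition 3.1, each $[tr(f(\Theta_{\nabla_k}))]$ is a polynomial in the Chern classes of $E_k$ and is independent of the chosen connection. Because $tr$ and $f$ both commute with direct sums of endomorphisms, the assignment $[E]\mapsto[tr(f(\Theta_D))]$ is additive on direct sums, and hence descends to an additive characteristic class $\psi_f\colon K^0(-)\to H^*_{dR}(-)$; consequently $\sum_k(-1)^k[tr(f(\Theta_{\nabla_k}))]=\psi_f([\widehat{E^\bullet}])$. By the very definition of $Z$ the complex $(E^\bullet,\mathbb{E}''_0)$ is exact on $X\setminus Z$, so $[\widehat{E^\bullet}]$ vanishes in $K^0(X\setminus Z)$, and $\psi_f([\widehat{E^\bullet}])$ with it.

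A more hands-on derivation would go through Theorem 3.2 directly: on $X\setminus Z$ the ranks of $\mathbb{E}''_0|_{E_k}$ are locally constant, so the orthogonal splittings of Section 2.3 globalize without the cutoff $\chi_\epsilon$ and produce genuinely compatible connections $\{D_k\}$, to which Proposition 3.1 allows us to pass; then Theorem 3.2 combined with the vanishing of $c_i(\widehat{E^\bullet})$ for $i\geq 1$ finishes the argument. I expect the main obstacle either way is bridging Theorem 3.2, which is stated for a single characteristic form $\phi(\Theta_{D_0})$, to the alternating sum that actually appears in the supertrace; this passage requires either the additive/Chern-character observation above or an iterated application of Whitney's product formula along the complex, and it is where exactness of $(E^\bullet,\mathbb{E}''_0)$ on $X\setminus Z$ is essentially used.
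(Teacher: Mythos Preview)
Your proposal is correct and, in its second variant (passing to compatible connections on $X\setminus Z$ and invoking Theorem~3.2 together with Proposition~3.1), matches the paper's argument essentially verbatim; the paper's entire proof is the sentence ``From this and Proposition~3.1, we automatically have'' the corollary, so your write-up is in fact more detailed than the original.

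Your first variant, via the additive characteristic class $\psi_f\colon K^0(-)\to H^*_{dR}(-)$ and the vanishing of $[\widehat{E^\bullet}]$ in $K^0(X\setminus Z)$, is a genuine and slightly cleaner alternative: it bypasses the specific Baum--Bott Whitney formula (Theorem~3.2) and uses only that $tr\circ f$ is additive on direct sums. The paper's route has the advantage of staying closer to the explicit Chern-class calculus already set up for later sections, while your $K$-theory route isolates exactly the structural input (exactness $\Rightarrow$ trivial virtual class) and would generalize more readily. Either way, you correctly identify that the equality is to be read at the level of de~Rham classes on $X\setminus Z$, and your remark that the displayed $tr(\Theta_{\nabla_k})$ should carry the $f$ is well taken.
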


\section{Characteristic Currents}
Now we define the characteristic currents on a cohesive module $(E^{\bullet}, \mathbb{E}^{''}, h).$ For $a\in H^*_{dR}(X),$ denote $a_k$ to be the degree-$k$ component of $a.$
\begin{definition}
For $(p_1, \cdots, p_k)\in [0, 2dim_{\mathbb{C}}(X)]^k$ define the \textbf{characteristic current} to be $Tr_{p_1}(E^{\bullet}, \nabla)\wedge\cdots\wedge Tr_{p_k}(E^{\bullet}, \nabla)\equiv\lim_{\epsilon\to 0}[Tr_s(f(\nabla^{\epsilon}))]_{p_1}\wedge\cdots\wedge[Tr_s(f(\nabla^{\epsilon}))]_{p_k}.$
\end{definition}
\begin{remark}
We will explain the meaning of wedge product right after Definition 5.2 (assuming Theorem 5.1 a priori).
\end{remark}
\begin{definition}\cite{AW2}
Let $f$ be a holomorphic function on $X.$ For $a\in\mathbb{N},$ the current $[\frac 1{f^a}]$ is defined as the functional on test forms $\xi\mapsto\lim_{\epsilon\to 0}\int_{|f|>\epsilon}\frac{\xi}{f^a}$ and $\overline{\partial}[\frac 1{f^a}]$ sends test form $\xi$ to $\lim_{\epsilon\to 0}\int_{|f|>\epsilon}\frac{\overline{\partial}(\xi)}{f^a}.$ These are well-defined by\cite[Theorem 7.1]{HL}. Let $\Pi=\Pi_1\circ\Pi_2\circ\cdots\circ\Pi_r$ be a sequence of resolutions of singularities, with $\Pi_i: Y_i\rightarrow Y_{i-1}$ with $Y_0=X.$ Then a current on $X$ is \textbf{pseudomeromorphic} if it can be written as $\sum_{\ell}\Pi_*\tau_{\ell},$ where $\tau_{\ell}$ is a current on some $Y_{\ell}$ of the form $\left(\prod_{i=1}^k[\frac 1{f^{a_i}}]\right)\overline{\partial}[\frac 1{f^{b_1}}]\wedge\cdots\wedge\overline{\partial}[\frac 1{f^{b_m}}]$ for some holomorphic $f$ on $Y_{\ell}.$
\end{definition}
Here, we need to define a notion of "wedge product" of currents. This is given by the \textbf{Coleff-Herrera product}\cite[Theorem 1.7.2]{CH}\cite[Theorem 2]{LK}. Call $(\epsilon_1, \cdots, \epsilon_p)\rightarrow (0, \cdots, 0)$ along an \textit{admissible path} if $\forall k\in\mathbb{N}$ and $\forall j\geq 2, \frac{\epsilon_{j-1}}{\epsilon_j^k}\to 0.$ in this case we write $\epsilon_1\ll\cdots\ll\epsilon_p\to 0.$ Then for $f_1, \cdots, f_p$ holomorphic, we define
\[
\overline{\partial}[\frac 1{f_1}]\wedge\cdots\wedge\overline{\partial}[\frac 1{f_p}]=\lim_{\epsilon_1\ll\cdots\ll\epsilon_p\to 0}\frac{\overline{\partial}\chi(|f_1|^2/\epsilon_1)}{f_1}\wedge\cdots\wedge\frac{\overline{\partial}\chi(|f_p|^2/\epsilon_p)}{f_p},
\]
where for each $(\epsilon_1, \cdots, \epsilon_p)$ and for any test form $\phi$ of bi-degree $(dim_{\mathbb{C}}(X),$ $dim_{\mathbb{C}}(X)-p),$ the expression on the right hand side denotes $\phi\mapsto\int_X\frac{\overline{\partial}\chi(|f_1|^2/\epsilon_1)}{f_1}\wedge\cdots\wedge\frac{\overline{\partial}\chi(|f_p|^2/\epsilon_p)}{f_p}\wedge\phi.$ Then for holomorphic $f_1, \cdots, f_k, g_1, \cdots, g_m$, and for a $(dim_{\mathbb{C}}(X),$ $dim_{\mathbb{C}}(X)-p)-$test form $\phi,$ define $\left(\prod_{i=1}^k[\frac 1{f_i}]\right)\overline{\partial}[\frac 1{g_1}]\wedge\cdots\wedge\overline{\partial}[\frac 1{g_m}](\phi)$ as
\[
\lim_{\substack{\delta_1, \cdots, \delta_k\to 0\\\epsilon_1\ll\cdots\ll\epsilon_m\to 0}}\int_{|f_i|>\delta_i, \forall i}\frac {\bigwedge_{j=1}^m\overline{\partial}\chi\left(\frac{|g_j|^2}{\epsilon_j}\right)\wedge\phi}{\prod_{i=1}^kf_i\prod_{j=1}^mg_j}
\]
\begin{remark}
It follows from Definition 5.2 that pushforwards of pseudomeromorphic currents under resolutions of singularities are still pseudomeromorphic.
\end{remark}
\begin{theorem}
The characteristic current $Tr_{p_1}(E^{\bullet}, \nabla)\wedge\cdots\wedge Tr_{p_k}(E^{\bullet}, \nabla)$ is a well-defined closed pseudomeromorphic current, with support contained in $Z,$ an analytic subvariety of positive codimension, that represents $Tr_s(f(\Theta_{\nabla}))_{p_1}\wedge\cdots\wedge Tr_s(f(\Theta_{\nabla}))_{p_k}$ for any $k-$tuple $(p_1, \cdots, p_k).$
\end{theorem}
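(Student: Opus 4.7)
The plan is to follow the construction of \cite{LW}, adapted from Chern forms of coherent sheaves to general characteristic forms on cohesive modules. I first treat the single-factor case $k=1$. For each $\epsilon>0$, the form $Tr_s(f(\Theta_{\nabla^{\epsilon}}))$ is globally smooth and closed on $X$ (note that $\chi_{\epsilon}\sigma$ extends smoothly across $Z$ by zero, so $\nabla^{\epsilon}$ is a smooth connection), and its degree-$p$ component represents the class $[Tr_s(f(\Theta_{\nabla}))_p]\in H^p_{dR}(X,\mathbb{C})$ by Theorem 3.1 together with the Chern-Weil independence of the characteristic class under the choice of connection. By Theorem 3.3, $\nabla^{\epsilon}$ is compatible with $\mathbb{E}''_0$ on the open set $\{\chi_{\epsilon}\equiv 1\}\subseteq X\setminus Z$, where $(E^{\bullet},\mathbb{E}''_0)$ is exact; the argument of Corollary 3.1 applied to $\nabla^{\epsilon}$ then forces $Tr_s(f(\Theta_{\nabla^{\epsilon}}))\equiv 0$ there. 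Hence each approximating form is supported in the shrinking neighbourhood $\{\chi_{\epsilon}<1\}$ of $Z$.

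Next I establish that $\lim_{\epsilon\to 0}Tr_s(f(\Theta_{\nabla^{\epsilon}}))_p$ exists as a pseudomeromorphic current on $X$. Locally, $\Theta_{\nabla^{\epsilon}}$ is a polynomial expression in $\nabla$, $\mathbb{E}''_0$, $\sigma$, $\chi_{\epsilon}$, and $d\chi_{\epsilon}$; only $\sigma$ (smooth off $Z$, cf.\ Section 3.3) and the derivatives of $\chi_{\epsilon}=\chi(|F|^2/\epsilon)$ carry singular behaviour as $\epsilon\to 0$. Choosing a log resolution $\Pi:\tilde{X}\to X$ of the ideal generated by $F$ so that $\Pi^*F$ becomes locally monomial, an explicit expansion reduces $\lim_{\epsilon\to 0}\Pi^*Tr_s(f(\Theta_{\nabla^{\epsilon}}))_p$ to model expressions of the form $\prod_i[1/f_i^{a_i}]\cdot\bigwedge_j\bar\partial[1/g_j^{b_j}]$, which exist as currents on $\tilde{X}$ by \cite{HL,AW2}. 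Pseudomeromorphy is preserved under $\Pi_*$ (Remark after Definition 5.2), producing a pseudomeromorphic current on $X$ supported in $Z$. Closedness passes to the limit because $dTr_s(f(\Theta_{\nabla^{\epsilon}}))=0$ for every $\epsilon$ and $d$ commutes with weak limits of currents; the limit represents $[Tr_s(f(\Theta_{\nabla}))_p]$ in $H^*_{dR}(X,\mathbb{C})$ because every smooth approximant already lies in that class.

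For $k\geq 2$, the wedge product is interpreted via admissible paths in the sense of Coleff-Herrera. Replacing the single cutoff by a family $\chi_{\epsilon_1},\ldots,\chi_{\epsilon_k}$, the limit of $[Tr_s(f(\nabla^{\epsilon_1}))]_{p_1}\wedge\cdots\wedge[Tr_s(f(\nabla^{\epsilon_k}))]_{p_k}$ along $\epsilon_1\ll\cdots\ll\epsilon_k\to 0$ yields, after the same resolution-of-singularities expansion, an iterated Coleff-Herrera product of the model $[\,\cdot\,]$ and $\bar\partial[\,\cdot\,]$ currents. By \cite[Theorem 1.7.2]{CH} and \cite[Theorem 2]{LK} this current exists, is pseudomeromorphic, and is independent of the admissible path; in particular it agrees with the diagonal single-$\epsilon$ limit used in Definition 5.1. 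Closedness and the representation of the product cohomology class $[Tr_s(f(\Theta_{\nabla}))_{p_1}\wedge\cdots\wedge Tr_s(f(\Theta_{\nabla}))_{p_k}]$ then follow exactly as in the single-factor case.

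The hard part is the local analysis on $\tilde{X}$: every term of the expanded $\Pi^*Tr_s(f(\Theta_{\nabla^{\epsilon}}))_p$ must be matched with a model monomial expression covered by the pseudomeromorphic / Coleff-Herrera calculus, which requires careful bookkeeping of how the non-smooth factors of $\sigma$ combine with the derivatives $d\chi_{\epsilon}$, and, in the $k\geq 2$ case, how these couple across the distinct factors under the admissible path limit. The support-in-$Z$ reduction of the first step is essential here, since it localises the entire computation to a neighbourhood of the exceptional divisor of $\Pi$, where the monomial Coleff-Herrera theory applies directly.
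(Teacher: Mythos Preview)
Your overall architecture---resolve singularities, expand in $\chi_\epsilon$, $d\chi_\epsilon$, $\sigma$, and match to model Coleff--Herrera expressions---is the same as the paper's. However, there is a real gap at the step where you pass from the Coleff--Herrera product to Definition~5.1. You write that the product ``is independent of the admissible path; in particular it agrees with the diagonal single-$\epsilon$ limit used in Definition~5.1.'' The diagonal path $\epsilon_1=\cdots=\epsilon_k=\epsilon\to 0$ is \emph{not} admissible (admissibility requires $\epsilon_{j-1}/\epsilon_j^m\to 0$ for all $m$), so nothing in \cite{CH} or \cite{LK} gives you this agreement for free. Showing that the iterated limit and the diagonal limit coincide is precisely the analytic content of the paper's proof: after the log resolution the question reduces to the equality
\[
\lim_{\epsilon\to 0}\int_{|\psi_1\psi_2|>\epsilon}\frac{\xi}{\psi_1\psi_2}
\;=\;
\lim_{\substack{\epsilon\to 0\\\delta\to 0}}\int_{\substack{|\psi_1|>\epsilon\\|\psi_2|>\delta}}\frac{\xi}{\psi_1\psi_2},
\]
together with an analogous identity for the $d\chi_\epsilon$-terms, and the paper proves these by explicit $L^1$ estimates (Lemma~5.2) combined with a further sequence of blow-ups to force $1/\psi_i\in L^1(\operatorname{Supp}\xi)$ (Lemma~5.3). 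Your proposal skips this work entirely.

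A secondary point: the paper first uses Proposition~4.1 to reduce $Tr_s(f(\Theta_\nabla))$ to a polynomial in Chern forms, and then invokes \cite[Theorem~5.1]{LW} for the Chern-form case so that the only new thing to check is the diagonal-vs.-iterated equality above. You instead propose to run the whole pseudomeromorphic expansion directly on $Tr_s(f(\Theta_{\nabla^\epsilon}))$. That is a legitimate alternative route, but it does not let you bypass the limit comparison---in fact it makes the bookkeeping you flag in your last paragraph strictly harder, since you no longer have the structural decomposition $c_{\ell}(E^\bullet,\nabla^\epsilon)=A+\sum_j\chi_\epsilon^j B_j+\sum_j\chi_\epsilon^{j-1}d\chi_\epsilon\wedge B_j'$ from \cite{LW} ready-made.
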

\begin{remark}
Denote $(\mathbb{D}^*(X), d)$ the complex of currents on $M.$ Here $\mathbb{D}^q(X)$ is the dual space to $\Omega^{2dim_{\mathbb{C}}(X)-q}_c(X),$ the vector space of compactly supported smooth forms on $X,$ with the dual topology. Also, the chain map $d:\mathbb{D}^q(X)\rightarrow\mathbb{D}^{q+1}(X)$ is given by $(dT)(\phi)=(-1)^{q+1}T(d\phi).$ Then we have $H^*_{dR}(X)\overset{\cong}\longrightarrow H^*(\mathbb{D}^*(X), d).$\cite[Chapter 3, Section 1]{GH}
\end{remark}
\begin{proof}
By Proposition 4.1 and linearity, it suffices to consider the case where $Tr_s(f(\Theta_{\nabla}))$ is a monomial in the Chern classes. Then from \cite[Lemma 2.1]{LW} and \cite[Theorem 5.1]{LW}, it suffices to show that $\forall\ell_1, \ell_2$ we have
\[
\lim_{\epsilon\to 0}c_{\ell_1}(E^{\bullet}, \nabla^{\epsilon})\wedge\lim_{\delta\to 0}c_{\ell_2}(E^{\bullet}, \nabla^{\delta})=\lim_{\epsilon\to 0}c_{\ell_1}(E^{\bullet}, \nabla^{\epsilon})\wedge c_{\ell_2}(E^{\bullet}, \nabla^{\epsilon})
\]
and then proceed inductively (for wedge products of more Chern classes). Since $[tr(f(\Theta_{\nabla}))]$ is a characteristic class, it does not depend on the choice of connection. As in the proof of \cite[Theorem 5.1]{LW}, for any $\epsilon$ and $\delta,$ we can write $c_{\ell_1}(E^{\bullet}, \epsilon)$ as $A_1+\sum_{j\geq 1}\chi_{\epsilon}^jB_j+\sum_{j\geq 1}\chi_{\epsilon}^{j-1}\wedge d\chi_{\epsilon}\wedge B_j'$ and $c_{\ell_2}(E^{\bullet}, \delta)$ as $A_2+\sum_{j\geq 1}\chi_{\delta}^jC_j+\sum_{j\geq 1}\chi_{\delta}^{j-1}\wedge d\chi_{\delta}\wedge C_j',$ where $A_1, A_2, B_j, C_j, B_j', C_j'$ are independent of $\epsilon$ and $\delta,$ with $A_1, A_2$ smooth and $B_j, C_j, B_j', C_j'$ polynomials in the entries of the minimal inverses $\sigma_k$ (cf. Definition 2.2), $D_{End(E^{\bullet})}\mathbb{E}_0^{''},$ and $\theta_k$, which are the matrices representing $\nabla_k$ ($0\leq k\leq N$). Also, $\chi_{\epsilon}=\chi(\frac{|F|^2}{\epsilon})$ as defined in Section 3.3. Therefore it suffices to consider where $\chi_{\epsilon}\not\equiv 1$ for $\epsilon$ small enough and show that for any $i, j$ and any $s, t$ products of entries of $\sigma_k, D_{End(E^{\bullet})}\mathbb{E}_0^{''}$ and $\theta_k$, we have $\chi_{\epsilon},
\lim_{\epsilon\to 0}\chi_{\epsilon}^is\wedge\lim_{\delta\to 0}\chi_{\delta}^jt=\lim_{\epsilon}\chi_{\epsilon}^{i+j}s\wedge t,$ and also $\lim_{\epsilon\to 0}\chi_{\epsilon}^id\chi_{\epsilon}\wedge s\wedge\lim_{\delta\to 0}\chi_{\delta}^jd\chi_{\delta}\wedge t=\lim_{\epsilon\to 0}\chi_{\epsilon}^{i+j}d\chi_{\epsilon}\wedge s\wedge d\chi_{\epsilon}\wedge t.$
\\\\
We use similar ideas to \cite[Lemma 2.1]{LW}. By resolution of singularities \cite[Theorem 3.36]{JK}, since in Section 2.3 we found a section $F=\boxtimes_{j=0}^NF_j$ to a coherent sheaf (actually a vector bundle) $\mathcal{F}$ such that $Z\subseteq Z(F),$ which implies that $(F)\cdot\mathcal{O}_X$\footnote{Here $(F)$ is the ideal of $\mathcal{O}_X$ generated by $(F).$} is not invertible on $Z(F),$ we know that there exists a (composition of) birational and projective modification(s) $\pi:Y\rightarrow X$ such that $\pi|_{Y-\pi^{-1}(Z(F))}: Y-\pi^{-1}(Z(F))\rightarrow X-Z(F)$ is an isomorphism, and the coherent sheaf of ideals on $Y$ generated by pullbacks of local sections to $(F)\cdot\mathcal{O}_X,$ which we write as $\pi^{-1}\left((F)\cdot\mathcal{O}_X\right),$ is a monomonial sheaf of ideals.\footnote{Also reference \cite[Note 3.16]{JK} for the equivalent characterizations of monomial ideal sheaves.} Equivalently, this is the subsheaf of $\mathcal{O}_Y$ generated by $\pi^*F=\boxtimes_{j=0}^N\pi^*F_j.$ This means that $\pi^*F_j=F_{j0}F_{j1},$ where $F_{j0}=\prod_{i=1}^nz_i^{c_i}$ is a monomial in local coordinates $\{z_1, \cdots, z_n\}$ and $Z(F_{j0})\subseteq\pi^{-1}(Z(F))$ and $F_{j1}$ is holomorphic and nonvanishing. Then on $Y$, we have the local formula
\begin{equation}
    \pi^*\sigma_k=\frac 1{F_{k0}}\phi_k, \forall 0\leq k\leq N
\end{equation} 
with $\phi_k$ smooth everywhere on $Y.$ (cf.the definition of $\sigma_k$ in Definition 2.2. This can also be found in \cite[Section 2]{AW}) Note that $D_{End(E^{\bullet})}\mathbb{E}_0^{''}$ and $\theta_k$ are everywhere smooth. Now by \cite[Equation 2.2]{LW}, writing $\pi^*s=\frac 1{\psi_1}\tilde{s}, \pi^*t=\frac 1{\psi_2}\tilde{t}$, with $\psi_1, \psi_2$ products of monomials (thus also monomials) of local coordinates on $Y$ and $\tilde{s}, \tilde{t}$ smooth.\footnote{More accurately $\pi^*s$ might be a sum of such terms, but we can certainly apply a normalization argument to achieve a fraction whose denominator is a monomial in local coordinates.} In view of Equation (2), it suffices to show that for any test $2dim_{\mathbb{C}}(X)$-form $\xi$ on $Y$ (here we note that $\chi\sim\chi^i, \forall i\in\mathbb{N},$ and also $\{\psi_1=0\}\bigcup\{\psi_2=0\}\subseteq\pi^{-1}(Z)=\{\pi^*F=0\},$ using the fact that $\pi$ is an isomorphism on $X-Z(F)$ and $s, t$ are smooth outside of $Z$)
\[
\bigg[\lim_{\epsilon\to 0}\frac{\chi\left(\frac{|\pi^*F|^2}{\epsilon}\right)^{i+j}}{\psi_1\psi_2}\bigg](\xi)=\bigg[\lim_{\epsilon\to 0}\frac{\chi\left(\frac{|\pi^*F|^2}{\epsilon}\right)^i}{\psi_1}\bigg]\bigg[\lim_{\delta\to 0}\frac{\chi\left(\frac{|\pi^*F|^2}{\delta}\right)^j}{\psi_2}\bigg](\xi)\Longleftrightarrow
\]
\begin{equation}
\lim_{\epsilon\to 0}\int_{|\psi_1\psi_2|>\epsilon}\frac{\xi}{\psi_1\psi_2}=\lim_{\substack{\epsilon\to 0\\\delta\to 0}}\int_{\substack{|\psi_1|>\epsilon\\|\psi_2|>\delta}}\frac{\xi}{\psi_1\psi_2}
\end{equation}
since this current does not depend on the choice of characteristic function.\cite[Lemma 2.1]{LW} Then the difference between the two is (noting that $\xi$ is a test form so $||\xi||_{L^{\infty}(X)}$ exists)
\[
\lim_{\substack{\epsilon\to 0\\\delta\to 0}}\int_{|\psi_1|>\epsilon}\frac{(\mathbbm{1}_{|\psi_2|>\epsilon/|\psi_1|}-\mathbbm{1}_{|\psi_2|>\delta})\xi}{\psi_1\psi_2}+\lim_{\epsilon\to 0}\int_{|\psi_1|<\epsilon}\frac{\mathbbm{1}_{|\psi_2|>\epsilon/|\psi_1|}\xi}{\psi_1\psi_2}
\]
\[
\leq\left(\lim_{\substack{\epsilon\to 0\\\delta\to 0}}\int_{\substack{|\psi_1|>\epsilon\\\delta>\epsilon/|\psi_1|\\\epsilon/|\psi_1|<|\psi_2|<\delta}}\left|\frac{\xi}{\psi_1\psi_2}\right|+\int_{|\psi_1\psi_2|<\epsilon}\left|\frac{\xi}{\psi_1\psi_2}\right|\right)+\lim_{\epsilon\to 0}\int_{|\psi_1|<\epsilon}\left|\frac{\xi}{\psi_1}\right|\leq
\]
\[
||\xi||_{L^{\infty}(X)}\left(\lim_{\substack{\epsilon\to 0\\\delta\to 0}}\int_{\substack{|\psi_1|>\epsilon\\\delta>\epsilon/|\psi_1|\\\epsilon/|\psi_1|<|\psi_2|<\delta}}\left|\frac{\mathbbm{1}_{Supp(\xi)}}{\psi_1\psi_2}\right|+\lim_{\epsilon\to 0}\int_{|\psi_1\psi_2|<\epsilon}\left|\frac{\mathbbm{1}_{Supp(\xi)}}{\psi_1\psi_2}\right|+\int_{|\psi_1|<\epsilon}\left|\frac{\mathbbm{1}_{Supp(\xi)}}{\psi_1}\right|\right)
\]
We have
\[
\lim_{\substack{\epsilon\to 0\\\delta\to 0}}\int_{\substack{|\psi_1|>\epsilon\\\delta>\epsilon/|\psi_1|\\\epsilon/|\psi_1|<|\psi_2|<\delta}}\left|\frac{\mathbbm{1}_{Supp(\xi)}}{\psi_1\psi_2}\right|\leq\lim_{\substack{\epsilon\to 0\\\delta\to 0}}\int_{\substack{|\psi_1|>\epsilon\\|\psi_2|<\delta}}\left|\frac{\mathbbm{1}_{Supp(\xi)}}{\psi_1\psi_2}\right|
\]
\[
\leq \lim_{\epsilon\to 0}\sqrt{\int_{|\psi_1|>\epsilon}\frac {\mathbbm{1}_{Supp(\xi)}}{|\psi_1|^2}}\cdot\lim_{\delta\to 0}\sqrt{\int_{|\psi_2|<\delta}\frac {\mathbbm{1}_{Supp(\xi)}}{|\psi_2|^2}}=O\left(\lim_{\delta\to 0}\int_{|\psi_2|<\delta}\frac {\mathbbm{1}_{Supp(\xi)}}{|\psi_2|}\right),
\]
which will follow from Lemma 5.3. To prove that this is $0,$ and also the second and third terms vanish, it suffices to prove the following two lemmas:
\begin{lemma}
Let $\phi$ be a holomorphic function on $M$ whose vanishing locus is a measure-zero set (specifically, a subvariety of positive codimension), and such that $\frac 1{\phi}\in L^1(Supp(\xi)),$ then $\lim_{\epsilon\to 0}\int_{|\phi|<\epsilon}\frac 1{|\phi|}=0.$ Here, $|\cdot|$ is the complex norm.
\end{lemma}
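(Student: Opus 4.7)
The plan is to derive the lemma from absolute continuity of the Lebesgue integral (equivalently, a direct application of the dominated convergence theorem), applied to the $L^1$ function $g := \mathbbm{1}_{\mathrm{Supp}(\xi)}/|\phi|$, whose integrability is exactly the hypothesis of the lemma.

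First, I would introduce the sublevel sets $E_\epsilon := \{x \in M : |\phi(x)| < \epsilon\}$ and note that they form a nested decreasing family as $\epsilon \to 0$, whose intersection is precisely the zero locus $Z(\phi) = \{\phi = 0\}$. Since $\phi$ is holomorphic and $Z(\phi)$ is assumed to be a complex-analytic subvariety of positive codimension (hence real codimension at least $2$), it carries Lebesgue measure zero with respect to the ambient volume form on $M$. This is the step where the ``measure-zero'' hypothesis of the lemma is consumed.

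Next, I would consider the sequence $g_\epsilon := g \cdot \mathbbm{1}_{E_\epsilon}$. At any point $x \notin Z(\phi)$ we have $|\phi(x)| > 0$, so $\mathbbm{1}_{E_\epsilon}(x) = 0$ for all $\epsilon$ sufficiently small; therefore $g_\epsilon \to 0$ pointwise almost everywhere. Since $0 \leq g_\epsilon \leq g$ pointwise and $g \in L^1$ by assumption, the dominated convergence theorem yields
\[
\lim_{\epsilon \to 0} \int_{|\phi| < \epsilon} \frac{\mathbbm{1}_{\mathrm{Supp}(\xi)}}{|\phi|}\,dV \;=\; \lim_{\epsilon \to 0} \int_{M} g_\epsilon\,dV \;=\; 0,
\]
which is the claimed limit.

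I do not anticipate a genuine obstacle: this is essentially a textbook fact about absolute continuity of the Lebesgue integral, once one knows that $Z(\phi)$ is Lebesgue-null. The only content beyond routine measure theory is verifying that proper complex-analytic subvarieties have Lebesgue measure zero, which is standard. The integrability hypothesis $1/\phi \in L^1(\mathrm{Supp}(\xi))$ is precisely what licenses the dominated-convergence step and therefore carries all of the weight of the statement.
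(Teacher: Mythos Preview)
Your proposal is correct and follows essentially the same route as the paper: restrict $\tfrac{1}{|\phi|}$ to the shrinking sublevel sets $\{|\phi|<\epsilon\}$, observe the resulting functions decrease to zero off the measure-zero set $Z(\phi)$, and invoke a convergence theorem licensed by the $L^1$ hypothesis. The only cosmetic difference is that the paper phrases the final step as monotone convergence on the decreasing sequence $f_n=\mathbbm{1}_{\{|\phi|<1/n\}}/|\phi|$, whereas you invoke dominated convergence with dominating function $g=\mathbbm{1}_{\mathrm{Supp}(\xi)}/|\phi|$; these are equivalent here.
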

\begin{proof}
Note that $codim(Z(\phi))>0\Rightarrow\mu(Z(\phi))=0,$ where $\mu$ is the pullback (under the coordinate maps) of the Lebesgue measure on $\mathbb{C}^{dim_{\mathbb{C}}(X)}.$ Define
\begin{equation}
    f_n: X\rightarrow [0, +\infty],\quad x\mapsto
    \begin{cases*}
       \frac 1{|\phi(x)|} & $|\phi(x)|<\frac 1n$ \\
       0        & otherwise
    \end{cases*}
  \end{equation}
Then note that $f_i(x)\geq f_j(x), \forall i<j,$ and $\lim_{n\to\infty}f_n(x)\neq 0\Leftrightarrow \phi(x)=0.$ Now since $\int_{Supp(\xi)}|f_1|\leq\int_{Supp(\xi)}\frac 1{|\phi|}<\infty$, then by monotone convergence, we have
\[
\lim_{\epsilon\to 0}\int_{|\phi|<\epsilon}\frac {\mathbbm{1}_{Supp(\xi)}}{|\phi|}=\lim_{n\to\infty}\int_{|\phi|<\frac 1n}\frac {\mathbbm{1}_{Supp(\xi)}}{|\phi|}=\lim_{n\to\infty}\int_{Supp(\xi)}f_n
\]
\[
=\int_{Supp(\xi)}\lim_{n\to\infty}f_n=\int_{Supp(\xi)\cap Z(\phi)}\lim_{n\to\infty}f_n,
\]
which is $0$ since $\mu(Supp(\xi)\cap Z(\phi))\leq\mu(Z(\phi))=0.$
\end{proof}
\begin{lemma}
We can apply a further sequence of blow-ups $\Pi:\tilde{Y}\rightarrow Y$ such that $\frac 1{\Pi^*\psi_1}$ and $\frac 1{\Pi^*\psi_2}\in L^1({Supp(\xi)}).$ Therefore we can assume WLOG that $\frac 1{\psi_1}, \frac 1{\psi_2}\in L^1({Supp(\xi)}).$
\end{lemma}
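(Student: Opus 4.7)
The plan is to reduce the problem to a local, monomial computation and then show that iterated blow-ups improve $L^1$ integrability through the positive Jacobian factor they introduce. Since $\mathrm{Supp}(\xi)$ is compact, we may cover it by finitely many coordinate charts of $Y$; in each such chart with coordinates $(z_1,\ldots,z_n)$, both $\psi_1$ and $\psi_2$ are monomials $\psi_i=\prod_j z_j^{a_{ij}}$, and the $L^1$-integrability of $1/|\psi_i|$ against Lebesgue measure reduces to requiring each exponent $a_{ij}<2$.

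The key computation is for the blow-up $\Pi$ of the origin of $\mathbb{C}^n$ with $n\geq 2$. In the standard chart $z_1=u_1$, $z_k=u_1 v_k$ $(k\geq 2)$, the holomorphic Jacobian equals $u_1^{n-1}$, so $\Pi^*dV=|u_1|^{2(n-1)}\,dV'$. Hence
\[
\Pi^*\!\left(\frac{dV}{|\psi_i|}\right)=|u_1|^{\,2(n-1)-\sum_j a_{ij}}\prod_{k\geq 2}|v_k|^{-a_{ik}}\,dV',
\]
with analogous formulas in the remaining charts by symmetry. Thus a single blow-up lowers the dominant exponent at the cost of mildly spreading the pole to the exceptional divisor; iterating at the strict transforms of the remaining high-multiplicity points, we can drive every exponent below $2$. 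Formally this is a Hironaka-type principalization applied to the monomial ideal sheaf generated by $\psi_i$.

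To handle both $\psi_1$ and $\psi_2$, I would first apply a modification $\Pi_1$ that achieves $1/\Pi_1^*\psi_1\in L^1(\mathrm{Supp}(\xi))$, then a further modification $\Pi_2$ doing the same for $\Pi_1^*\psi_2$. The crucial observation is that additional blow-ups only add positive Jacobian factors, hence do not destroy the $L^1$-integrability already established for $\psi_1$; the composite $\Pi=\Pi_1\circ\Pi_2$ is the desired modification, and on $\tilde{Y}$ we may therefore assume the integrability statement for both $\psi_i$ at once.

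The main technical obstacle is the inductive bookkeeping: one must verify that any individual exponent can always be lowered in the presence of the other coordinate exponents, and that the procedure terminates across all charts simultaneously. Termination follows either from a strictly decreasing invariant (for instance, the lexicographic maximum of exponent tuples taken over all charts) or, more cleanly, by directly invoking Hironaka's algorithmic principalization of monomial ideals, which after finitely many admissible blow-ups produces a resolution in which every local monomial has all exponents sufficiently small for our purposes.
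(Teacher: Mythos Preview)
Your overall architecture matches the paper's: cover $\mathrm{Supp}(\xi)$ by finitely many coordinate charts, observe that $\psi_1,\psi_2$ are monomials there, translate $L^1$-integrability into a bound on the individual exponents, and then invoke resolution-type theorems to drive those exponents down. The paper does exactly this, but with one structural difference: rather than treating $\psi_1$ and $\psi_2$ in sequence, it applies Koll\'ar's order-reduction theorem \cite[Theorem~3.68]{JK} directly to the single ideal $I=(\psi_1,\psi_2)\cdot\mathcal{O}_Y$, so that a single tower of blow-ups simultaneously lowers the relevant exponents for both monomials. This bypasses the need for your ``crucial observation'' about sequential blow-ups entirely.

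That observation is where your argument has a genuine gap. You claim that once $1/\Pi_1^*\psi_1\in L^1$ has been achieved, further blow-ups $\Pi_2$ cannot destroy it because they ``only add positive Jacobian factors.'' But the lemma asks for $1/|\Pi^*\psi_1|$ to be $L^1$ against Lebesgue measure on the \emph{new} space $\tilde Y$, not against the pulled-back measure; the Jacobian factor lives in the latter, not the former. Concretely: take $\psi_1=z_1$ on $\mathbb{C}^2$, so $1/|z_1|\in L^1_{\mathrm{loc}}$. Blow up the origin and pass to the chart $z_1=wu_2,\ z_2=u_2$, where $\Pi^*\psi_1=wu_2$ and $1/|wu_2|$ is still $L^1_{\mathrm{loc}}$. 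Now blow up the origin of \emph{that} chart; in the chart $w=s,\ u_2=st$ one gets $\Pi^*\psi_1=s^2t$, and $1/|s^2t|\notin L^1_{\mathrm{loc}}$ since $\int_{|s|\le r}|s|^{-2}\,dA$ diverges. So an auxiliary blow-up chosen to help $\psi_2$ can absolutely wreck the integrability already obtained for $\psi_1$. Your explicit computation with the Jacobian is the correct one for the \emph{application} (where test forms pull back with the Jacobian), but it does not establish the statement as written, and it does not justify the sequential step. The clean fix is the paper's: reduce the order of the ideal generated by both monomials at once.
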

\begin{proof}
We will show this for $\psi_1$ only. In local coordinates, write $\psi_1=\prod_{i=1}^nz_i^{c_i}.$ Then by Fubini, and writing $r=diam(Supp(\xi))<\infty$ we have $||\psi_1||_{L^1({Supp(\xi)})}\leq\prod_{i=1}^n\int_{|z_i|\leq r}\frac 1{|z_i|^{c_i}}$, which is finite if $c_i$ is smaller than $2dim_{\mathbb{C}}(Y), \forall i.$ Now since $Supp(\xi)\cap Z(\pi^*F)$ is compact, we can cover it by finitely many coordinate neighborhoods, and $|\psi_1\psi_2|$ will have a strictly positive lower bound outside these neighborhoods. Thus the problem reduces to finding $\Pi:\tilde{Y}\rightarrow Y$ such that $\Pi^*\psi_1$ and $\Pi^*\psi_2$ are monomials in local coordinates covering $Supp(\xi)\cap Z(\pi^*F)$ in which the degree of every coordinate does not exceed $2dim_{\mathbb{C}}(Y)-1.$ This will follow directly from \cite[Theorem 3.68]{JK}, which states that if $I\subseteq\mathcal{O}_X$ is an ideal sheaf with $\mathsf{Maxord}_{Supp(\xi)\cap Z(\pi^*F)}(I)\leq m$ for some $m\in\mathbb{N}$, then there is a composition of blow-ups $\Pi=\Pi_r\circ\cdots\circ\Pi_1$ such that $\mathsf{Maxord}_{\Pi^{-1}(Supp(\xi)\cap Z(\pi^*F))}\Pi^{-1}(I)=\mathsf{Maxord}_{\Pi^{-1}(Supp(\xi))\cap Z(\Pi^*\pi^*F)}<m.$ Here, for a point $y\in Y, \mathsf{ord}_y(I)\overset{def}=\max\{r:\mathfrak{m}_y^r\mathcal{O}_{Y, y}\supset I_x\},$ where $\mathfrak{m}_y$ is the maximal ideal of $\mathcal{O}_{Y, y};$ for a subset $Z\subseteq Y,$ define $\mathsf{Maxord}_Z(I)=\sup_{y\in Z}\mathsf{ord}_y(I).$\cite[Definition 3.47]{JK} In our case, we let $I=(\psi_1, \psi_2)\mathcal{O}_Y.$ Then $\mathsf{ord}_y(I)$ is just the maximal order of the two monomials.
\end{proof}
\textit{(Proof of Theorem 5.1--Continued)}\\ Now we show that $\lim_{\epsilon\to 0}\pi^*(\chi_{\epsilon})^id\pi^*\chi_{\epsilon}\wedge \pi^*s\wedge\lim_{\delta\to 0}(\pi^*\chi_{\delta})^jd\pi^*\chi_{\delta}\wedge \pi^*t=\lim_{\epsilon\to 0}(\pi^*\chi_{\epsilon})^{i+j}d\pi^*\chi_{\epsilon}\wedge \pi^*s\wedge d\pi^*\chi_{\epsilon}\wedge \pi^*t$. Since $\pi^*s, \pi^*t$ have nice expressions, for a test form $\xi$ of matching degree, it suffices to consider
\[
\bigg[\lim_{\epsilon\to 0}\chi\left(\frac{|\pi^*F|^2}{\epsilon}\right)^i\frac{d\chi \left(\frac{|\pi^*F|^2}{\epsilon}\right)}{\psi_1}\bigwedge \lim_{\delta\to 0}\chi\left(\frac{|\pi^*F|^2}{\delta}\right)^j\frac{d\chi \left(\frac{|\pi^*F|^2}{\delta}\right)}{\psi_2}\bigg](\xi)
\]
\[
=\lim_{\substack{\epsilon\to 0\\\delta\to 0}}\int_X\frac{d\chi \left(\frac{|\pi^*F|^2}{\epsilon}\right)\wedge d\chi \left(\frac{|\pi^*F|^2}{\delta}\right)\wedge\xi}{\psi_1\psi_2}
\]
and
\[
\bigg[\lim_{\epsilon\to 0}\chi\left(\frac{|\pi^*F|^2}{\epsilon}\right)^{i+j}\frac{\bigwedge^2d\chi \left(\frac{|\pi^*F|^2}{\epsilon}\right)}{\psi_1\psi_2}\bigg](\xi)=\lim_{\epsilon\to 0}\int_X\frac{\bigwedge^2d\chi \left(\frac{|\pi^*F|^2}{\epsilon}\right)\wedge\xi}{\psi_1\psi_2}.
\]
The difference is
\[
\lim_{\substack{\epsilon\to 0\\\delta\to 0\\\tau\to 0}}\int_X\frac{\xi}{\psi_1\psi_2}\wedge\left(d\chi \left(\frac{|\pi^*F|^2}{\epsilon}\right)\bigwedge d\chi \left(\frac{|\pi^*F|^2}{\delta}\right)-\bigwedge ^2 d\chi \left(\frac{|\pi^*F|^2}{\tau}\right)\right)
\]
Note that the term in the parenthesis is nonzero only when $|\pi^*F|^2<(1+\nu)\cdot\min\{\epsilon, \delta, \tau\},$ where $supp(\chi)\subseteq [0, 1+\nu).$ 
By definition of $\chi, ||\nabla\chi||_{L^{\infty}(\mathbb{R})}<\infty,$ and 
\[
\left|\left|d\chi \left(\frac{|\pi^*F|^2}{\epsilon}\right)\right|\right|\leq ||\nabla\chi||_{L^{\infty}(\mathbb{R})}\cdot\left|\left|\nabla\left(\frac{|\pi^*F|^2}{\epsilon}\right)\right|\right|,
\]
so the difference does not exceed (writing $k=||\xi||_{L^{\infty}(X)}\cdot||\nabla\chi||_{L^{\infty}(\mathbb{R})}$ \\$\cdot||\nabla(|\pi^*F|^2)||_{L^{\infty}(Supp(\xi))},$ and denoting the region $\{|\pi^*F|^2<(1+\nu)\cdot\min\{\epsilon, \delta, \tau\}\}$ by $C_{\epsilon, \delta, \tau}$),
\[
k\cdot\lim_{\substack{\epsilon\to 0\\\delta\to 0\\\tau\to 0}}\int_{C_{\epsilon, \delta, \tau}\cap Supp(\xi)}\frac{1}{|\psi_1\psi_2|}\left(\frac 1{\epsilon\delta}-\frac 1{\tau^2}\right).
\]
Note that by Hölder's Inequality, the integrand does not exceed
\[
\sqrt{\int_{Supp(\xi)\cap C_{\epsilon, \delta, \tau}}\frac 1{|\psi_1\psi_2|^2}}\cdot\sqrt{\int_{Supp(\xi)\cap C_{\epsilon, \delta, \tau}}\left|\frac 1{\epsilon\delta}-\frac 1{\tau^2}\right|^2}
\]
\[
\leq\sqrt{\int_{Supp(\xi)}\frac 1{|\psi_1\psi_2|^2}}\cdot\sqrt{\int_{Supp(\xi)\cap C_{\epsilon, \delta. \tau}}\frac 2{|\pi^*F|^2}}.
\]
Since $Supp(\xi)$ is compact, from what we have shown before the first term is finite. Also, the second term goes to $0$ as $\epsilon, \delta$ and $\tau$ go to $0$ simultaneously, since $\mu(Z(|\pi^*F|^2))=0$ and after applying a further sequence of resolutions making the degrees of local coordinates low enough in $|\pi^*F|^2$, just as in the proof of Lemma 5.3, we can follow essentially the same proof as for Equation (3)). Then the statement will then follow from Proposition 5.1.
\end{proof}
\begin{proposition}
$\pi_*\left(\lim_{\epsilon\to 0}\pi^*(\chi_{\epsilon})^id\pi^*\chi_{\epsilon}\wedge \pi^*s\wedge\lim_{\delta\to 0}(\pi^*\chi_{\delta})^jd\pi^*\chi_{\delta}\wedge \pi^*t\right)=\lim_{\epsilon\to 0}\chi_{\epsilon}^id\chi_{\epsilon}\wedge s\wedge\lim_{\delta\to 0}\chi_{\delta}^jd\chi_{\delta}\wedge t$, and we can say the same about the other three currents we are considering. \footnote{i.e. Pushing forward by $\pi$ is the same as taking away all the $\pi^*$ in the expressions.}
\end{proposition}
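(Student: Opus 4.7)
The plan is to exploit three ingredients. First, $\pi$ is proper, being a composition of projective birational modifications, so the pushforward $\pi_*$ is a well-defined, weak-$*$ continuous map on currents. Second, $\pi$ restricts to a biholomorphism on $Y\setminus\pi^{-1}(Z(F))$, whose complement has measure zero, so the change-of-variables formula $\int_Y\pi^*\omega=\int_X\omega$ holds for integrable smooth forms $\omega$ on $X$. Third, both iterated Coleff--Herrera limits (on $Y$ and on $X$) are known to exist by the analysis already carried out in the proof of Theorem 5.1.

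My approach is to verify the claimed identity on test forms at fixed approximation parameters $\epsilon,\delta>0$ first, and then pass to the limit. For $\epsilon>0$ small enough, the factor $\chi_\epsilon^i d\chi_\epsilon$ is supported in a thin shell where $|F|^2$ is comparable to $\epsilon$, hence uniformly bounded away from $Z$; consequently the form
\[
\omega_{\epsilon,\delta}:=\chi_\epsilon^i d\chi_\epsilon\wedge s\wedge\chi_\delta^j d\chi_\delta\wedge t
\]
is a genuine smooth form on all of $X$, because $s$ and $t$ are smooth on $X\setminus Z$. Since pullback commutes with wedge products and with $d$, the form $\pi^*\omega_{\epsilon,\delta}$ is precisely the integrand appearing inside the limit on the LHS of the statement. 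For any test form $\xi$ on $X$ of matching degree, the change-of-variables formula applied to the smooth integrable form $\omega_{\epsilon,\delta}\wedge\xi$ gives
\[
(\pi_*\pi^*\omega_{\epsilon,\delta})(\xi)=\int_Y\pi^*(\omega_{\epsilon,\delta}\wedge\xi)=\int_X\omega_{\epsilon,\delta}\wedge\xi=\omega_{\epsilon,\delta}(\xi),
\]
so $\pi_*\pi^*\omega_{\epsilon,\delta}=\omega_{\epsilon,\delta}$ as currents on $X$ for every fixed pair $(\epsilon,\delta)$. I would then invoke weak-$*$ continuity of $\pi_*$ (guaranteed by properness of $\pi$) to commute $\pi_*$ past the iterated admissible-path limit, which would yield the stated identity. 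The three remaining cases from Theorem 5.1 (the wedge combinations involving $\chi^i$ factors without $d\chi$, and the mixed products) follow by the same scheme with the obvious notational changes.

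The main obstacle will be the last step: justifying that $\pi_*$ commutes with the Coleff--Herrera limit $\delta\ll\epsilon\to 0$, since this is a limit along a specific admissible path rather than ordinary weak-$*$ sequential convergence. However, because Theorem 5.1 has already established that both iterated limits exist as pseudomeromorphic currents, and because the identity $\pi_*\pi^*\omega_{\epsilon,\delta}=\omega_{\epsilon,\delta}$ holds \emph{exactly} for every fixed $(\epsilon,\delta)$, passing to the limit reduces to checking that evaluation on a fixed test form is jointly continuous in $(\epsilon,\delta)$ along the admissible path; no estimates beyond those already obtained in the proof of Theorem 5.1 (in particular Lemma 5.2 and Lemma 5.3) should be needed.
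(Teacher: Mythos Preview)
Your proposal is correct and amounts to spelling out the paper's one-line proof (which reads in its entirety: ``This is clear from the definition''): by definition $\pi_*(T)(\xi)=T(\pi^*\xi)$, and since the limit current on $Y$ is itself defined by pairing with test forms, applying $\pi_*$ simply replaces the test form by $\pi^*\xi$, after which change of variables (valid because $\pi$ is a biholomorphism off a null set and the integrand is smooth for each fixed $(\epsilon,\delta)$) yields the result. Your concern in the final paragraph about commuting $\pi_*$ with the iterated admissible-path limit is therefore unnecessary---that commutation is tautological from the definitions of pushforward and of weak limit, not an additional analytic step.
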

\begin{proof}
This is clear from the definition.
\end{proof}
\begin{remark}
It is clear from our method that \cite[Theorem 5.1]{LW} also applies to any other characteristic class in de-Rham cohomology.
\end{remark}
\section{Chern Currents in Bott-Chern Cohomology}
\subsection{The Bott-Chern Character}
We first define the double complex of Bott-Chern cohomology classes of the cohesive module $(E^{\bullet}, \mathbb{E}^{''}, h).$\cite{BSW} Letting $d=\partial+\overline{\partial}$ be the de-Rham differential, we have
\begin{definition}
$H_{BC}^{p,q}(X)\equiv \left(\mathcal{A}^{p, q}(X)\cap Ker(d)\right)/\overline{\partial}\partial\mathcal{A}^{p-1, q-1}(X).$
\end{definition}
The \textbf{Bott-Chern character} of $(E^{\bullet}, \mathbb{E}^{''}, h)$ is defined by $ch_{BC}(E^{\bullet},\mathbb{E}^{''}, h)=Trs(exp(-\mathcal{R}_h)).$ By \cite[Lemma 2.20]{HQ}, this defines a class in $H_{BC}(X)$. The Bott-Chern character is also independent of the Hermitian metric $h$ by \cite[Corollary 3.14]{HQ}\cite[Theorem 8.2]{BSW}. Therefore it makes sense to write it as $ch_{BC}(E^{\bullet}, \mathbb{E}^{''}).$ If the complex $(\mathcal{A}^{\bullet}(X, E^{\bullet}), \mathbb{E}^{''}_0)$ is exact, then we have
\begin{theorem}\cite[Theorem 4.21]{HQ}
Let $\mathbb{E}_t^{''}=\sum_{k=0}^{N}t^{\frac{1-k}2}\mathbb{E}^{''}|_{E_k}$ for $t>0$ and $\mathcal{R}_t$ denote the corresponding curvatures. Let $N_H$ be the number operator $h\mapsto kh$ if $h\in\mathcal{A}^{\bullet}(X, E_k)$\cite[Definition 4.18]{HQ}\cite[Definition 2.3]{Bis}, and we have
\[
ch_{BC}(E^{\bullet}, \mathbb{E}^{''})=\partial\overline{\partial}\int_1^{\infty}Trs\left(\frac{N_H\cdot exp(-\mathcal{R}_t)}t\right)dt\Rightarrow ch_{BC}(E^{\bullet}, \mathbb{E}^{''})=0.
\]
\end{theorem}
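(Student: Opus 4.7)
The strategy is to take the cited transgression formula from \cite[Theorem 4.21]{HQ} as a black box and then read off the vanishing directly from the definition of Bott-Chern cohomology. So first I would invoke that result to produce the identity
\[
ch_{BC}(E^\bullet,\mathbb{E}'') \;=\; \partial\overline{\partial}\,\eta,\qquad \eta \;:=\; \int_1^\infty Tr_s\!\left(\frac{N_H\cdot exp(-\mathcal{R}_t)}{t}\right)dt,
\]
noting that the exactness hypothesis on $(\mathcal{A}^\bullet(X,E^\bullet),\mathbb{E}''_0)$ is precisely what makes $Tr_s(N_H\cdot exp(-\mathcal{R}_t)/t)$ decay fast enough as $t\to\infty$ for $\eta$ to make sense as an honest smooth differential form on $X$. (This decay statement is the analytic heart of \cite[Theorem 4.21]{HQ}, and I treat it as given.)

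Next, I would track the bi-degree carefully: the operator $N_H$ preserves bi-grading, and $Tr_s(exp(-\mathcal{R}_t))$ represents the total Chern character in $\bigoplus_{p\geq 0}(\mathcal{A}^{p,p}(X)\cap\ker d)$, so the integrand contributes to bi-degrees $(p,p)$ for $p\geq 0$. After stripping the outer $\partial\overline{\partial}$, the integral $\eta$ lies in $\bigoplus_{p\geq 1}\mathcal{A}^{p-1,p-1}(X)$, which is precisely the shift required to apply Definition 6.1 piece by piece.

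Finally, I would invoke the definition $H_{BC}^{p,q}(X)=(\mathcal{A}^{p,q}(X)\cap\ker d)\big/\overline{\partial}\partial\,\mathcal{A}^{p-1,q-1}(X)$: since $\partial\overline{\partial}=-\overline{\partial}\partial$ as operators on smooth forms, each bi-degree piece $\partial\overline{\partial}\,\eta_{p-1,p-1}$ lies inside the subspace being quotiented out. Hence the class of $ch_{BC}(E^\bullet,\mathbb{E}'')$ in $H_{BC}^{\bullet,\bullet}(X)$ is the zero class, which is the conclusion.

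The only substantive thing to check beyond the cited formula is that $\eta$ is genuinely a smooth form and that its bi-degree indexing aligns with Definition 6.1, both of which are immediate once the $t\to\infty$ convergence from \cite[Theorem 4.21]{HQ} is accepted. In other words, the genuinely hard analytic input (large-$t$ decay of the superconnection heat form under exactness of $\mathbb{E}''_0$) is already packaged into the transgression formula, so the implication itself is a tautology on the quotient defining $H_{BC}^{\bullet,\bullet}(X)$.
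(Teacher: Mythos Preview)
Your proposal is correct and matches the paper's treatment: the paper does not give its own proof of this statement but simply imports it from \cite[Theorem 4.21]{HQ}, so the hard analytic input (the transgression formula and the large-$t$ decay under exactness of $\mathbb{E}_0''$) is taken as given, and the implication $ch_{BC}=\partial\overline{\partial}\eta\Rightarrow[ch_{BC}]=0$ is, as you say, immediate from Definition~6.1 since $\partial\overline{\partial}=-\overline{\partial}\partial$. There is nothing to add.
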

Now we consider the case where $\mathcal{A}^{\bullet}(X, E^{\bullet})$ is not exact. Again, denote the support of this complex by $Z.$ 
\subsection{Transgression Formulae and Superconnection Currents}
There are Bott-Chern currents representing $ch_{BC}(E^{\bullet}, \mathbb{E}^{''}).$ Denote $Z(F)=X'$ and $\phi:\bigwedge T^*X\rightarrow \bigwedge T^*X, a\mapsto (2\pi i)^{-|a|/2}a.$ Now write $\delta_{X'}\in\mathbb{D}^*(X)$ be the current of integration along $X'.$ Let $\mathscr{HE}$ be the sheaf of cohomology groups of $E^{\bullet}.$ Note that $\forall x\in X$ there is a canonical isomorphism $\mathscr{HE}_x\cong\{y\in E^{\bullet}:\mathbb{E}_0^{''}(y)=0, \mathbb{E}_0^{'}(y)=0\},$ so by \cite[Theorem 1.2]{Bis} it inherits a Hermitian metric from $h.$ Let $\nabla^{\mathscr{HE}}$ be the connection compatible with the inherited Hermitian metric. Now we define the following superconnections:
\begin{enumerate}
\item Fix a $y\in (N_{X/X'})_{\mathbb{R}}$ with $\overline{y}\in\overline{(N_{X/X'})_{\mathbb{R}}},$ define $B=\nabla^{\mathscr{HE}}+\partial_y\mathbb{E}^{''}_0+\partial_{\overline{y}}\mathbb{E}^{'}_0.$
\item For $t>0,$ let $A_t=\nabla^{E^{\bullet}}+\sqrt{t}\mathbb{E}_0.$
\end{enumerate}
Now for any $t>0,$ define currents $\zeta_{E^{\bullet}}(t), \zeta^{'}_{E^{\bullet}}(0)$ and $T(h)$ by 
\[
\zeta_{E^{\bullet}}(t)=\frac 1{\Gamma(t)}\int_0^{\infty}u^{t-1}\left(Trs(N_H\cdot exp(-A_u^2))-\int_{X'}Trs(N_H\cdot exp(-B^2))\cdot\delta_{X'}\right)du
\]
\[
\int_X\mu\zeta^{'}_{E^{\bullet}}(0)=\frac{\partial}{\partial t}|_{t=0}\int_X\mu\zeta_{E^{\bullet}}(t), \text{   $\forall \mu\in\mathcal{A}^{\bullet}(X).$}
\]
\[
T(h)=\phi(\zeta^{'}_{E^{\bullet}}(0)).
\]
Then we have the following representation formula
\begin{theorem}\cite[Theorem 2.5]{BGS} 
\[
ch_{BC}(E^{\bullet}, \mathbb{E}^{''})=\left(\int_{N_{X/X'}}\phi(Trs(exp(-B^2)))\right)\delta_{X'}-\frac{\overline{\partial}\partial}{2\pi i}T(h).
\]
\end{theorem}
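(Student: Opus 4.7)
The theorem as stated is the Bismut-Gillet-Soul\'e representation formula \cite[Theorem 2.5]{BGS} transcribed into the cohesive-module language, so my plan is to adapt the BGS superconnection-transgression argument using Theorem 6.1 as the bulk input. First I would verify that, for each $u>0$, $\phi(Tr_s(\exp(-A_u^2)))$ is $d$-closed and represents $ch_{BC}(E^{\bullet}, \mathbb{E}^{''})$ modulo $\partial\overline{\partial}$-exact forms; this follows from the rescaling $\mathbb{E}_0 \mapsto \sqrt{u}\,\mathbb{E}_0$ in Theorem 6.1 together with the standard Duhamel/Bianchi identity
\[
\frac{d}{du}Tr_s(\exp(-A_u^2)) = -\frac{\partial\overline{\partial}}{2\pi i}\,Tr_s\!\left(\frac{N_H}{u}\exp(-A_u^2)\right),
\]
which underlies every superconnection transgression formula.

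Next I would analyze the two limits $u\to 0$ and $u\to\infty$. At $u\to 0$ the supertrace is smooth and reproduces the ``top'' representative of $ch_{BC}$. At $u\to\infty$ the key input is a Mathai-Quillen/Bismut-type localization: away from $X' = Z(F)$ the operator $\mathbb{E}_0$ is fiberwise invertible (this is exactly the exactness of the complex outside $Z$), so $\exp(-u\mathbb{E}_0^2)$ decays rapidly off $X'$; near $X'$ one Taylor-expands $\mathbb{E}_0$ in transverse coordinates, and Gaussian integration along the fibers of $N_{X/X'}$ produces the local model superconnection $B$ acting on $\mathscr{HE}$. Combined, these asymptotics show that the distributional limit as $u\to\infty$ of $\phi(Tr_s(\exp(-A_u^2)))$ is precisely $\left(\int_{N_{X/X'}}\phi(Tr_s(\exp(-B^2)))\right)\delta_{X'}$.

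Finally I would use the Mellin transform to regularize the divergent integral that appears when one integrates the transgression identity from $0$ to $\infty$. The asymptotic analysis above shows that $\zeta_{E^{\bullet}}(t)$ is holomorphic for $\mathrm{Re}(t)$ large and admits a meromorphic continuation to a neighborhood of $0$ which is regular at $0$, so $T(h) = \phi(\zeta_{E^{\bullet}}^{'}(0))$ is well-defined; integrating the transgression identity against the weight $u^{t-1}/\Gamma(t)$ and analytically continuing to $t=0$ then rearranges into the stated formula.

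The hardest step by far is the $u\to\infty$ localization: one must show that the supertrace concentrates on $X'$ in a manner controlled enough that the limit exists as a current, equals the normal-bundle Gaussian integral, and commutes with the outside $\overline{\partial}\partial$ on the right-hand side. This is where the Hermitian cohomology metric on $\mathscr{HE}$ obtained from \cite[Theorem 1.2]{Bis} and its compatible connection $\nabla^{\mathscr{HE}}$ are essential, since the local model $B$ must restrict on $X'$ to $\nabla^{\mathscr{HE}}$ plus precisely the transverse terms $\partial_y\mathbb{E}_0^{''} + \partial_{\overline{y}}\mathbb{E}_0^{'}$ built into its definition.
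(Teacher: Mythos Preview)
The paper does not prove this statement: Theorem~6.2 is simply quoted from \cite[Theorem~2.5]{BGS} and used as a black box, with no argument supplied beyond the definitions of $A_t$, $B$, $\zeta_{E^\bullet}(t)$, and $T(h)$ that precede it. So there is no ``paper's own proof'' to compare your proposal against.

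Your outline is a faithful sketch of the original Bismut--Gillet--Soul\'e argument (Duhamel transgression identity, small-$u$ smoothness, large-$u$ localization onto $X'$ via Mathai--Quillen/Gaussian analysis in the normal directions, and Mellin regularization to define $T(h)$), and the paper's Theorem~6.3 records precisely the large-$u$ current convergence you would need. If you intend this as an exposition rather than a new proof, that is fine; just be aware that the paper itself treats the result as imported, so your write-up goes well beyond what the paper does.
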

By \cite[Theorem 3.2]{Bis}, We know that the wave-front set of $\left(\int_{N_{X/X'}}\phi(Trs(exp(-B^2)))\right)\delta_{X'}$ is contained in $(N_{X/X'})_{\mathbb{R}^{*}},$ and there is the following convergence resembling our previous construction:
\begin{theorem}
As $t\to\infty,$ we have
\[
Trs(exp(-A_t^2))\to\left(\int_{N_{X/X'}}Trs(exp(-B^2))\right)\delta_{X'}
\]
and also (abusing the notation a bit and writing $A_t$ also as the current $\xi\mapsto\int_XTrs(exp(-A_t^2))\wedge\xi$ and $B$ by the current $\xi\mapsto\left(\int_{N_{X/X'}}Trs(exp(-B^2))\right)\delta_{X'}(\xi).$)
\[
\lim_{t\to\infty}\sup_{\xi\in\Gamma}|\xi|^m\cdot\left|\widehat{\phi\cdot(A_t-B)}(\xi)\right|=\lim_{t\to\infty}\sup_{\xi\in\Gamma}|\xi|^m\cdot\left|(A_t-B)(\phi\widehat{\xi})\right|=0,
\]
with the following nice convergence: $\exists C'>0$ such that for $t\ll 1,$ we have
\[
\lim_{t\to\infty}\sup_{\xi\in\Gamma}|\xi|^m\cdot\left|\widehat{\phi\cdot(A_t-B)}(\xi)\right|\leq\frac{C'}{\sqrt{t}},
\]
for any fixed $m\geq 1,$ for any open $U\subseteq X$ biholomorphic to a ball and contained in a trivializing neighborhood of $T_{\mathbb{R}}^{*}X$ and any smooth function $\phi$ supported on $U,$ and any $\Gamma$ a closed cone such that on $U\cap X', \Gamma\cap (N_{X/X'})_{\mathbb{R}}^*=\{0\}.$
\end{theorem}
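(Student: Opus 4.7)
The plan is to adapt the heat-kernel localization technique of Bismut that underlies [Bis, Theorem 3.2] and [BGS, Theorem 2.5]. The overall strategy is to show that as $t\to\infty$ the supertrace $Trs(\exp(-A_t^2))$ concentrates on $X'=Z(F)$ by analyzing the local structure of $A_t^2$ in coordinates adapted to the normal bundle $N_{X/X'}$, and then to upgrade distributional convergence to wave-front set convergence via oscillatory-integral estimates on the Fourier side. Expanding $A_t^2 = \nabla^2 + \sqrt{t}\,[\nabla,\mathbb{E}_0] + t\,\mathbb{E}_0^2$, the term $t\,\mathbb{E}_0^2$ behaves like a Morse-type potential whose kernel is precisely $\mathscr{HE}$; since cohomology vanishes away from $X'$, this term supplies exponential damping $e^{-ct}$ of the heat kernel outside any tubular neighborhood of $X'$.

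First I would work in local holomorphic coordinates $(z,w)$ near a point $x_0\in X'$ with $w$ tangential to $X'$ and $z$ a fiber coordinate on $N_{X/X'}$, and perform the Getzler-style rescaling $z\mapsto z/\sqrt{t}$. Under this rescaling the operator $t\,\mathbb{E}_0^2$ together with the fiberwise derivatives $\partial_y\mathbb{E}^{''}_0,\partial_{\overline y}\mathbb{E}^{'}_0$ combines into the square of the model superconnection $B$ acting along the fibers of $N_{X/X'}$. Then, writing $\exp(-A_t^2)$ by a Duhamel/Volterra expansion around the Mehler-type kernel of this rescaled limit operator and taking the fiberwise supertrace (via Berezin integration in the normal directions), the leading term recovers exactly $(\int_{N_{X/X'}}Trs(\exp(-B^2)))\delta_{X'}$, and the Duhamel remainder is controlled by $C/\sqrt{t}$ uniformly on compact subsets of $U$.

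Next I would establish the wave-front set statement. Fix $\phi$ supported in $U$ and a closed cone $\Gamma\subset T^*_{\mathbb{R}}X$ with $\Gamma\cap (N_{X/X'})_{\mathbb{R}}^*=\{0\}$ over $U\cap X'$. Split the Fourier transform of $\phi\cdot(A_t-B)$ into a piece supported at positive distance from $X'$, which is Schwartz in $\xi$ with constants decaying exponentially in $t$ by the off-diagonal heat kernel bound, and a piece localized in a thin tubular neighborhood of $X'$. In the latter, transversality of $\Gamma$ to the conormal bundle forces at least one tangential component of $\xi$ to be comparable to $|\xi|$ on $\Gamma$; integrating by parts $m$ times in that tangential direction produces the weight $|\xi|^{-m}$ at the cost of tangential derivatives falling on $\phi$, on the local symbol of $A_t-B$, and on the rescaled Mehler kernel, all of which are bounded uniformly under the rescaling. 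Combining with the $O(1/\sqrt{t})$ bound from the Duhamel remainder yields the claimed decay, and a partition of unity subordinate to a finite cover of $X'$ by such neighborhoods globalizes the statement.

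The main obstacle will be securing uniform-in-$\xi$, uniform-in-$t$ control of the Duhamel remainder: one must simultaneously track Gaussian decay in the rescaled normal variable (to justify interchanging limits and integrals against test forms), polynomial decay in the Fourier variable $\xi$ (to absorb the $|\xi|^m$ factor from the wave-front set statement), and the $1/\sqrt{t}$ rate. Propagating the Mehler Gaussian tails through each tangential integration by parts and verifying that the commutators between tangential derivatives and the fiber rescaling do not accumulate factors worse than polynomial weights in $|\xi|$ and $t$ is the delicate technical heart of the argument; once in place, the rest of the proof is assembly of standard superconnection heat kernel machinery.
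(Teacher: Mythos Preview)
The paper does not supply its own proof of this theorem: it is stated as a citation of \cite[Theorem 3.2]{Bis} (with the representation formula coming from \cite[Theorem 2.5]{BGS}), and no argument is given beyond the attribution. So there is nothing in the paper to compare your proposal against.

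That said, your sketch is broadly consistent with Bismut's approach: the Getzler rescaling in the normal directions, identification of the limiting model operator as $B^2$, Duhamel/Volterra expansion around the Mehler kernel, and tangential integration by parts to obtain the wave-front set estimate are the standard ingredients. For the purposes of this paper you would be better served simply citing \cite{Bis} as the paper does, rather than reproducing a multi-page heat-kernel argument whose full details (in particular the uniform Duhamel remainder control you flag as the main obstacle) are genuinely nontrivial and already carried out in the cited reference.
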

Observe that all the above constructions involve only the degree-$0$ and degree-$1$ terms of $\mathbb{E}$. It is then natural to ask the following question:
\begin{question}
What effects do the $\mathbb{E}_k^{''}$ terms ($k\geq 2$) have on $ch_{BC}(E^{\bullet}, \mathbb{E}^{''})$?
\end{question}
The answer is that they have no effects. This follows from Qiang's transgression formula with respect to superconnections\cite{HQ}.
\subsubsection{Known Transgression Formulae}
There are two main types of transgression formulae. The first type is with respect to the moduli space (with the topology of uniform $C^{\infty}$ convergence on compact sets) $\mathscr{M}$ of Hemitian metrics on $E^{\bullet}.$\cite[Theorem 8.1.2]{BSW}\cite[Theorems 2.1, 2.2, 2.4]{Bis}\cite[Proposition 3.10, Corollary 3.13, Theorem 3.19]{HQ}. The second type is with respect to the moduli space of superconnections. We will need the following (combining Corollary 4.8 and Proposition 4.15 of \cite{HQ}):
\begin{theorem} Let $f$ be a convergent power series. Let $\mathcal{E}$ be the space of all $\overline{\partial}-$superconnections of degree-$1$ on $(E^{\bullet}, h).$ Then $\exists\delta_1, \delta_2,$ which are $1-$forms on the subspace of $\mathcal{A}^{\bullet, 0}(X, End(E^{\bullet}))$ of exotic degree\footnote{From Proposition 3.1, we can write an element $A\in \mathcal{A}^{p, q}(X, End(E^{\bullet}))$ as $\phi\otimes\tau,$ with $\phi\in\mathcal{A}^{p, q}(X)$ and $\tau\in End^d(E^{\bullet})$ for some $d.$ Then we define the exotic degre to be $d+q-p.$} $-1$ and the subspace of $\mathcal{A}^{0, \bullet}(X, End(E^{\bullet}))$ of exotic degree $1$ respectively, and $\gamma_1, \gamma_2$ which are sections to the subspaces of exotic degree $0$ of $\mathcal{A}^{0, \bullet}(X, End(E^{\bullet}))$ and $\mathcal{A}^{\bullet, 0}(X, End(E^{\bullet}))$ respectively such that
\[
-d^{\mathcal{E}}Trs(f(\mathcal{R}_h))=\partial Trs(f'(\mathcal{R}_h)\cdot\delta_1)+\overline{\partial}Trs(f'(\mathcal{R}_h)\cdot\delta_2)
\]
and
\[
\overline{\partial} Trs(f'(\mathcal{R}_h\cdot\gamma_1))=Trs(f'(\mathcal{R}_h)\cdot\delta_1)
\]
\[
\partial Trs(f'(\mathcal{R}_h\cdot\gamma_2))=Trs(f'(\mathcal{R}_h)\cdot\delta_2).
\]
\end{theorem}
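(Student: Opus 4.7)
The plan is to adapt the Quillen--Bismut transgression formalism to the affine space $\mathcal{E}$ of $\overline{\partial}$-superconnections, in parallel with the proofs of \cite[Corollary 4.8, Proposition 4.15]{HQ}. A tangent vector at $\mathbb{E}^{''} \in \mathcal{E}$ is an endomorphism-valued form $\dot{\mathbb{E}}^{''} \in \mathcal{A}^{0,\bullet}(X, End(E^{\bullet}))$ of total degree $1$; by the uniqueness in Theorem 2.1, it induces a canonical companion variation $\dot{\mathbb{E}}^{'} \in \mathcal{A}^{\bullet, 0}(X, End(E^{\bullet}))$ on the $\partial$-side, so that $\dot{\mathbb{E}} = \dot{\mathbb{E}}^{''} + \dot{\mathbb{E}}^{'}$ is the derivative of the full $d$-superconnection.

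First I would derive the variation formula. Quillen's Bianchi-type identity $\tfrac{d}{ds}\mathcal{R}_s = [\mathbb{E}_s, \dot{\mathbb{E}}_s]$ together with the supertrace invariance of $f$ gives
\[
d^{\mathcal{E}} Trs(f(\mathcal{R}_h)) = Trs\bigl(f'(\mathcal{R}_h)\cdot [\mathbb{E}, \dot{\mathbb{E}}]\bigr).
\]
The $h$-unitarity of $\mathbb{E}$ (Theorem 2.1(3)), combined with Quillen's identity $Trs([\mathbb{E}, \cdot]) = d\,Trs(\cdot)$, rewrites the right-hand side as $-d\,Trs(f'(\mathcal{R}_h)\cdot \dot{\mathbb{E}})$. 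Splitting $\dot{\mathbb{E}} = \dot{\mathbb{E}}^{''} + \dot{\mathbb{E}}^{'}$ and $d = \partial + \overline{\partial}$ and matching bidegrees, one identifies $\delta_1$ with the induced variation in the exotic-degree-$(-1)$ subspace of $\mathcal{A}^{\bullet, 0}$ and $\delta_2$ with the variation in the exotic-degree-$1$ subspace of $\mathcal{A}^{0,\bullet}$, yielding the first identity.

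For the second pair, I would construct primitives $\gamma_1, \gamma_2$ of exotic degree $0$ by exploiting the curved-dga structure. The natural candidates are obtained by writing the relevant variation component as $[\mathbb{E}^{''}, \tau_1]$ (resp.\ $[\mathbb{E}^{'}, \tau_2]$) for some exotic-degree-$0$ endomorphism-valued form $\tau_i$ whose existence is forced by the unitarity constraint pairing $\dot{\mathbb{E}}^{''}$ with $\dot{\mathbb{E}}^{'}$. One then uses the Bianchi identity $[\mathbb{E}, \mathcal{R}_h] = 0$ and the commutativity of $f'(\mathcal{R}_h)$ with $\mathcal{R}_h$ inside $Trs$ to show that the Leibniz rule for $[\mathbb{E}^{''}, \cdot]$ (resp.\ $[\mathbb{E}^{'}, \cdot]$) collapses to $\overline{\partial}$ (resp.\ $\partial$) after applying the supertrace, allowing one to pull the derivative outside and obtain the stated equalities with $\gamma_i$ built from $\tau_i$.

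The main obstacle will be the second pair of identities: while the first transgression formula is essentially the standard Quillen variation calculation plus a bidegree split, producing primitives $\gamma_1, \gamma_2$ of the prescribed exotic degree requires careful use of both the flatness $(\mathbb{E}^{''})^2 = 0$ and the full Bianchi identity for $\mathbb{E}$, together with consistent bookkeeping of bidegrees $(p,q)$ and endomorphism degrees $d$ through the combination $d + q - p$. The crux is showing that the object pulled outside $Trs$ is genuinely $\overline{\partial}$ (or $\partial$) applied to $Trs(f'(\mathcal{R}_h)\cdot\gamma_i)$, rather than merely cohomologous to it; this is where the computation concentrates and where the curved-dga identities from \cite{HQ} are essential.
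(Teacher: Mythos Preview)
The paper does not give its own proof of this theorem: it is stated as the combination of Corollary~4.8 and Proposition~4.15 of \cite{HQ}, and for the actual construction of $\delta_1,\delta_2,\gamma_1,\gamma_2$ the reader is simply referred to Definitions~4.5 and~4.11 of \cite{HQ}. So there is no in-paper argument to compare against; your proposal is a sketch of the argument that \cite{HQ} carries out, and in outline it is the right one.

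Your first half is essentially the standard computation: from $d^{\mathcal{E}}\mathcal{R}_h=[\mathbb{E},\dot{\mathbb{E}}]$, Bianchi $[\mathbb{E},\mathcal{R}_h]=0$, and $Trs([\mathbb{E},\cdot])=d\,Trs(\cdot)$ one gets $-d^{\mathcal{E}}Trs(f(\mathcal{R}_h))=-d\,Trs(f'(\mathcal{R}_h)\dot{\mathbb{E}})$ up to sign conventions. The point to be careful about is that simply splitting $\dot{\mathbb{E}}=\dot{\mathbb{E}}''+\dot{\mathbb{E}}'$ and $d=\partial+\overline{\partial}$ does not immediately yield the stated form: $\mathcal{R}_h$ has mixed bidegree, so $Trs(f'(\mathcal{R}_h)\dot{\mathbb{E}}'')$ is not a pure $(0,\bullet)$-form. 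What actually happens in \cite{HQ} is that one uses the refined identities $Trs([\mathbb{E}'',\cdot])=\overline{\partial}\,Trs(\cdot)$ and $Trs([\mathbb{E}',\cdot])=\partial\,Trs(\cdot)$ separately, together with the exotic-degree decomposition, to isolate $\delta_1,\delta_2$; the exotic degree is precisely the bookkeeping device that makes this separation clean. Your description ``matching bidegrees'' glosses over this, and it is where a reader would want more detail.

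For the second pair of identities your plan is correct in spirit but vague at the key step. The primitives $\gamma_1,\gamma_2$ in \cite{HQ} are not produced by writing the variation as a supercommutator $[\mathbb{E}'',\tau_1]$ abstractly; they are given by explicit formulae (Definition~4.11 there), and the identities are then checked directly using $(\mathbb{E}'')^2=0$, the unitarity relation linking $\mathbb{E}'$ to $\mathbb{E}''$, and the exotic-degree filtration. Your remark that ``the existence is forced by the unitarity constraint'' is the right intuition, but as written it does not pin down $\gamma_i$, and the assertion that Leibniz ``collapses to $\overline{\partial}$ after applying the supertrace'' needs the specific form of $\gamma_i$ to be justified. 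Since the paper itself defers entirely to \cite{HQ} for this, your honest acknowledgement that this is the crux is appropriate; to complete the argument you would need to reproduce those explicit definitions rather than argue their existence indirectly.
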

For the construction of $\gamma_1, \gamma_2, \delta_1, \delta_2$ refer to Definition 4.11 and Definition 4.5 in \cite{HQ}. It then directly follows that in $H_{BC}^{\bullet}(X),$
\begin{corollary}
Let $\psi_t(\mathbb{E}^{''})\in\mathcal{E}$ be the superconnection $\mathbb{E}^{''}_0+\mathbb{E}^{''}_1+t\sum_{t\geq 2}\mathbb{E}^{''}_k,$ and let the associated curvature forms be $\phi_t(\mathcal{R}_h).$ Then $Trs(f(\phi_1(\mathcal{R}_h)))\simeq Trs(f(\phi_0(\mathcal{R}_h)))=Trs(f(\mathcal{R}_h))$ in $H_{BC}^{\bullet}(X).$
\end{corollary}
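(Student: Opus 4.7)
The plan is to apply the transgression formula of Theorem 6.3 along the affine path $\{\psi_t(\mathbb{E}^{''})\}_{t\in[0,1]} \subset \mathcal{E}$, which has constant tangent vector $\dot{\psi}_t = \sum_{k\geq 2}\mathbb{E}_k^{''}$ (a legitimate vector in the tangent space to the space of $\overline{\partial}$-superconnections, since it has total degree $1$), and to integrate in $t$. This will express the difference of characteristic forms at the two endpoints as $\partial\overline{\partial}$ of a globally defined form on $X$, hence as a class that vanishes in $H_{BC}^{\bullet}(X)$. Since $\psi_1(\mathbb{E}^{''}) = \mathbb{E}^{''}$ by definition, we automatically have $\phi_1(\mathcal{R}_h) = \mathcal{R}_h$, so this will give the whole statement.

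First I would combine the three identities of Theorem 6.3 into a single $\partial\overline{\partial}$-transgression. Substituting $Trs(f'(\mathcal{R}_h)\cdot\delta_1) = \overline{\partial}Trs(f'(\mathcal{R}_h)\cdot\gamma_1)$ and $Trs(f'(\mathcal{R}_h)\cdot\delta_2) = \partial Trs(f'(\mathcal{R}_h)\cdot\gamma_2)$ into the first identity and using $\partial\overline{\partial} = -\overline{\partial}\partial$, I obtain
\[
-d^{\mathcal{E}}\,Trs(f(\mathcal{R}_h)) = \partial\overline{\partial}\Bigl(Trs(f'(\mathcal{R}_h)\cdot\gamma_1) - Trs(f'(\mathcal{R}_h)\cdot\gamma_2)\Bigr)
\]
as an identity of $1$-forms on $\mathcal{E}$ valued in $\mathcal{A}^{\bullet}(X)$. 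This upgrade from $d$-exact to $\partial\overline{\partial}$-exact is what makes the argument produce a Bott--Chern, rather than merely de--Rham, statement.

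Next I would contract both sides with the constant vector $\dot{\psi}_t$ at each $\psi_t$ and integrate from $0$ to $1$. Because $\partial$ and $\overline{\partial}$ act on $X$, they commute with both contraction in the $\mathcal{E}$-direction and integration in $t$, which gives
\[
Trs(f(\phi_1(\mathcal{R}_h))) - Trs(f(\phi_0(\mathcal{R}_h))) = -\partial\overline{\partial}\int_0^1 \Bigl(Trs(f'(\mathcal{R}_{\psi_t})\cdot\gamma_1(\dot{\psi}_t)) - Trs(f'(\mathcal{R}_{\psi_t})\cdot\gamma_2(\dot{\psi}_t))\Bigr)\,dt,
\]
where $\gamma_i(\dot{\psi}_t)$ denotes the value of the $1$-form $\gamma_i$ at $\psi_t$ evaluated on the tangent vector $\dot{\psi}_t$. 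The right-hand side is manifestly $\partial\overline{\partial}$-exact, so $[Trs(f(\phi_1(\mathcal{R}_h)))] = [Trs(f(\phi_0(\mathcal{R}_h)))]$ in $H_{BC}^{\bullet}(X)$.

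The main obstacle will be confirming that the forms $\gamma_1,\gamma_2,\delta_1,\delta_2$ produced pointwise in $\mathcal{E}$ by \cite[Definitions 4.5, 4.11]{HQ} depend continuously (indeed smoothly) on $\psi_t$ along the affine path, and that the families $\{\gamma_i(\dot{\psi}_t)\}_{t\in[0,1]}$ of sections of $\mathcal{A}^{\bullet}(X, End(E^{\bullet}))$ are uniformly bounded in an appropriate Fr\'echet seminorm so that the integral in $t$ converges and commutes with $\partial\overline{\partial}$. Since the parameter interval $[0,1]$ is compact and the path is affine, this should reduce to verifying naturality of the construction in \cite{HQ} with respect to the straight-line parametrization, followed by a standard dominated-convergence argument.
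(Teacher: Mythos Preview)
Your proposal is correct and is precisely the unpacking of what the paper means by ``it then directly follows'': combine the three identities of Theorem~6.3 into a single $\partial\overline{\partial}$-transgression, contract with the tangent $\dot{\psi}_t=\sum_{k\ge 2}\mathbb{E}''_k$ along the affine segment in $\mathcal{E}$, and integrate in $t$ to conclude that the difference of characteristic forms is $\partial\overline{\partial}$-exact. Your observation that $\psi_1(\mathbb{E}'')=\mathbb{E}''$ forces $\phi_1(\mathcal{R}_h)=\mathcal{R}_h$ is the right identification of endpoints (the equality sign in the displayed statement sits on the wrong side), and the regularity concern you raise is handled by the compactness of $[0,1]$ together with the polynomial dependence of $\mathcal{R}_{\psi_t}$ and of the auxiliary data $\gamma_i,\delta_i$ on $t$.
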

\newpage
\bibliographystyle{abbrv}
\bibliography{Thesis}
\end{document}